\newtheorem{theorem}{Theorem}[section]
\newtheorem{proposition}{Proposition}[section]
\newtheorem{lemma}[theorem]{Lemma}
\newtheorem{corollary}[theorem]{Corollary}
\theoremstyle{remark}
\newcommand{\bsp}{\begin{split}}
\newcommand{\esp}{\end{split}}
\newcommand{\be}{\begin{equation}}
\newcommand{\ee}{\end{equation}}
\newcommand{\bes}{\begin{equation*}}
\newcommand{\ees}{\end{equation*}}
\newcommand{\bv}\boldsymbol{}
\numberwithin{equation}{section}
\renewcommand{\pmod}[1]{~({\rm mod}\,#1)}
\begin{document}

\title{Large character sums: Burgess's theorem and zeros of $L$-functions} 
\author{Andrew Granville} 
\address{AG: D\'epartement de math\'ematiques et de statistique,
Universit\'e de Montr\'eal, CP 6128 succ. Centre-Ville,
Montr\'eal, QC H3C 3J7, Canada.; and Department of Mathematics, University College London, Gower Street, London WC1E 6BT, England.}
\email{{\tt andrew@dms.umontreal.ca}}
\author{Kannan Soundararajan} 
\address{KS: Department of Mathematics, Stanford University, Stanford, CA 94305, USA.} 
\email{\tt ksound@stanford.edu} 
\thanks{Andrew Granville was partially supported by an NSERC Discovery Grant, a Canadian Research Chair, and an ERC Advanced Grant. Kannan Soundararajan was partially supported by NSF grant DMS 1001068, and a Simons Investigator grant from the Simons Foundation.}
\subjclass[2010]{Primary: 11M26. Secondary:  11L40, 11M20}
\keywords{Bounds on character sums, zeros  of Dirichlet $L$-functions,  multiplicative functions}
\date{\today}

\begin{abstract} We study the conjecture that $\sum_{n\leq x} \chi(n)=o(x)$ for any primitive Dirichlet character $\chi \pmod q$ with $x\geq q^\epsilon$, which is known to be true if the Riemann Hypothesis holds for $L(s,\chi)$. We show that it holds under the weaker assumption that `$100\%$' of the zeros of $L(s,\chi)$ up to height $\tfrac 14$ lie on the 
critical line. We also establish various other consequences of having large character sums; for example, that if  the conjecture holds for $\chi^2$ then it also holds for $\chi$.
\end{abstract}

\maketitle 

\section{Introduction} 

\noindent A central quest of analytic number theory is to estimate the 
character sum
\begin{equation} 
\label{1.1} 
S(x,\chi) = \sum_{n\le x} \chi(n),
\end{equation} 
where $\chi\pmod q$ is a primitive character.    We would like to show that 
\begin{equation} 
\label{Conj}  S(x,\chi)=o(x)
\end{equation}   
in as wide a range for $x$ as possible, and in particular 
whenever  $x\geq q^\epsilon$ for any fixed positive $\epsilon$ (which implies Vinogradov's conjecture that the least quadratic non-residue mod $q$ is $\ll_\epsilon q^\epsilon$). In \cite{GSLCS1} we
showed that  \eqref{Conj} holds when $\log x/\log\log q \to \infty$, assuming the Riemann Hypothesis for $L(s,\chi)$,
and proved unconditionally that this range is the best possible.  
Burgess \cite{Bu1,Bu2} gave the best unconditional result, now more than fifty years old, that \eqref{Conj} holds for all $x\geq q^{1/4+\epsilon}$ 
if $q$ is assumed to be cube-free, and slightly weaker variants for general $q$.   The main results of this paper give further connections between large values of character sums and zeros of the corresponding $L$-function. 

Before describing our two main theorems, we give the following corollaries which give a qualitative feel 
for what is established.


\begin{corollary}\label{cor4}  Let $\chi$ be a primitive quadratic character $\pmod q$, let $\epsilon> (\log q)^{-\frac 13}$ be 
real.  If the region $\{ s: \ \text{Re}(s) \ge \frac 34, \ |\text{Im}(s)| \le \frac 14\}$ contains 
no more than $\epsilon^2 (\log q)/1600$ zeros of $L(s,\chi)$, then for all $x\ge q^{\epsilon}$ we have 
\[
 \Big| \sum_{n\le x} \chi(n) \Big|  \ll \frac x{(\log x)^{\frac{1}{100}} }.
 \]
\end{corollary}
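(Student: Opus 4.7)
The plan is to deduce this corollary as the contrapositive of one of the two main theorems of the paper, which quantitatively link the size of $|S(x,\chi)|/x$ to a lower bound on the number of zeros of $L(s,\chi)$ in a rectangle of the form $\{s : \text{Re}(s) \ge \sigma,\ |\text{Im}(s)| \le T\}$. Assume for contradiction that there exists $x \ge q^{\epsilon}$ with $|S(x,\chi)| \ge c\, x/(\log x)^{1/100}$ for a suitable absolute constant $c$; the goal is to produce strictly more than $\epsilon^2 (\log q)/1600$ zeros of $L(s,\chi)$ inside $\{s:\text{Re}(s)\ge 3/4,\ |\text{Im}(s)|\le 1/4\}$.

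The first step uses the Granville--Soundararajan pretension framework. A Hal\'asz-type input applied to the hypothesis $|S(x,\chi)| \gg x/(\log x)^{1/100}$ forces the pretension distance
\[
\mathbb{D}(\chi(n), n^{it}; x)^2 \;=\; \sum_{p\le x} \frac{1-\text{Re}(\chi(p)p^{-it})}{p}
\]
to be bounded by a small constant for some real $t$. Because $\chi$ is quadratic, the same estimate with $\chi$ replaced by $\bar\chi=\chi$ yields a small pretension distance to $n^{-it}$; by the triangle inequality for $\mathbb{D}$, the pair $(1,n^{2it})$ is then close, which under $x \ge q^{\epsilon}$ and $\epsilon > (\log q)^{-1/3}$ forces $|t|\le 1/4$. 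Thus $\chi$ itself pretends to be $1$ on the primes up to $x$ (and in particular up to $q$), and the tilt parameter stays inside the horizontal strip $|\text{Im}(s)|\le 1/4$.

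The second step converts this pretension statement into a zero lower bound. Small $\mathbb{D}(\chi,1;q)$ is equivalent to the partial sum $\sum_{p\le q} \chi(p) \log p/p$ being abnormally large and positive; feeding this into the explicit formula for $-L'(s,\chi)/L(s,\chi)$ in the region $\text{Re}(s)>3/4$, or equivalently using a Landau-type zero-density argument, produces many zeros of $L(s,\chi)$ in the half-plane $\text{Re}(s)\ge 3/4$. With the bookkeeping tracked through the pretension inequality, one expects a bound of shape $\gg \epsilon^2 \log q$, and the constants are set up so that this exceeds $\epsilon^2 (\log q)/1600$, contradicting the hypothesis and proving the corollary.

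The main obstacle I anticipate is precisely this constant chase: matching the exponent $1/100$ in the saving, the denominator $1600$ in the permissible zero count, and the dimensions $\sigma=3/4,\ T=1/4$ of the rectangle through the Hal\'asz-to-pretension-to-zeros chain. A second subtlety is the quadratic hypothesis: for non-quadratic $\chi$, nothing forces $t$ to be small and the zero count would have to be collected on a shifted rectangle centred at $1+it$; the fact that $\chi=\bar\chi$ is what lets one anchor the rectangle at the real axis with height $1/4$. The lower threshold $\epsilon > (\log q)^{-1/3}$ should be exactly what is needed to ensure that the pretension argument rules out a nontrivial $|t|\le 1/4$ and that the resulting zero count strictly dominates the tolerance $\epsilon^2(\log q)/1600$.
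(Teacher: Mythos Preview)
The paper's proof of this corollary is a one-line application of Theorem~\ref{main2} (the version for characters of small order, here $k=2$): take $L=(\epsilon^2\log q)/4$. Since $\log x\ge \epsilon\log q$, the disc $\{s:|s-1|<L(\log q)/(\log x)^2\}$ has radius at most $1/4$, hence lies inside the rectangle $\{\text{Re}(s)\ge 3/4,\ |\text{Im}(s)|\le 1/4\}$; Theorem~\ref{main2} then supplies at least $L/400=\epsilon^2(\log q)/1600$ zeros there, contradicting the hypothesis. The constraints $\epsilon>(\log q)^{-1/3}$ and $N\le(\log x)^{1/100}$ are exactly what make the range $(cN)^{8}\le L\le(\log x)/2$ nonempty for this choice of $L$.

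Your proposal instead attempts to re-prove Theorem~\ref{main2} from scratch. Your first step is fine and essentially coincides with Lemma~\ref{lem2.3}: Hal\'asz gives a small pretension distance to some $n^{it}$, and the quadratic hypothesis together with the triangle inequality forces $t$ to be tiny (this is \eqref{lem233} with $k=2$). The gap is in your second step. Saying that one ``feeds this into the explicit formula for $-L'/L$'' or uses a ``Landau-type zero-density argument'' does not produce a \emph{lower} bound on the number of zeros of the required strength; zero-density methods give upper bounds, and the logarithmic-derivative formula by itself only relates a sum over zeros to prime sums without forcing many individual zeros into a fixed box. The paper's actual engine is different: a Plancherel identity (Lemma~\ref{lem2.2}) shows that a large smoothed character sum forces $|L(1-\lambda+i\phi+i\xi,\chi)|$ to be large at some point just left of the $1$-line, and then the Hadamard-product bound of Lemma~\ref{Hadlem} converts that large value into the inequality $\sum_\rho \lambda/|1+\lambda+i\phi+i\xi-\rho|^2\ge(\log x)/4$ (Proposition~\ref{prop2.4}), from which the zero count follows by separating near and far zeros. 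Without this mechanism your sketch does not close, so either cite Theorem~\ref{main2} directly (as the paper does) or reproduce the Plancherel/Hadamard argument explicitly.
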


There are $\ll \log q$ zeros $\beta+i\gamma$ of $L(s,\chi)$ with $0<\beta<1$ and 
$|\gamma|\leq \frac 14$, and we expect these zeros to satisfy the Riemann hypothesis $\beta=\frac 12$. 
Our result can therefore be paraphrased as stating that if \eqref{Conj} is false for $x=q^\epsilon$ then a positive proportion ($\gg \epsilon^2$) of  the zeros of $L(s,\chi)$, up to height $1$, lie off the $\frac 12$-line.  We believe that the method could be adapted to 
increase this proportion to $\gg \epsilon^{1+\delta}$ for any $\delta >0$, but we do not pursue this here. 
A similar result holds for arbitrary primitive characters.  

\begin{corollary}\label{cor3} Let $\chi \pmod q$ be a primitive character.  Let $\epsilon$ and $T$ be real numbers with $1\le T\le (\log q)^{\frac 1{200}}$ and $\epsilon \ge (\log q)^{-\frac 13}$. 
Suppose that for every real $\phi$ with $|\phi|\leq T$ the region 
$\{ s: \ \text{Re}(s) \ge \frac 34, \ |\text{Im}(s)-\phi| \le \frac 14\}$ contains no more than $\epsilon^2 (\log q)/1440$ 
zeros of $L(s,\chi)$.  Then for all $x\ge q^{\epsilon}$ we have 
\[
 \Big| \sum_{n\le x} \chi(n) \Big| \ll \frac x{T} .
\]
\end{corollary}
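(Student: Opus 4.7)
The plan is to argue by contrapositive: assume $|S(x,\chi)| \ge cx/T$ for some $x \ge q^\epsilon$ and some absolute constant $c$, and produce a height $\phi \in [-T,T]$ at which the zero-count hypothesis fails. Because Corollary \ref{cor3} is announced as a consequence of the main theorems of the paper (stated below), the plan reduces to identifying the correct theorem, feeding it the right input, and checking that its output contradicts the hypothesis directly.

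First I would pass from the character-sum lower bound to pretentiousness information. Hal\'asz's inequality says that if
\[
\min_{|t| \le T} \sum_{p \le x} \frac{1 - \text{Re}(\chi(p) p^{-it})}{p} \ge C\log T
\]
with $C$ sufficiently large, then $|S(x,\chi)| = o(x/T)$. Thus under the contrapositive assumption there must exist some $t_0$ with $|t_0| \le T$ at which $\chi$ pretends quantitatively to be $n^{it_0}$, meaning the above sum is $\ll \log T$. Next I would invoke the paper's main theorem to translate this pretentiousness into zeros: the theorem should force more than $\epsilon^2 (\log q)/1440$ zeros of $L(s,\chi)$ in the rectangle $\{s : \text{Re}(s) \ge 3/4,\ |\text{Im}(s) - t_0| \le 1/4\}$. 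Taking $\phi = t_0$ in the zero hypothesis then yields the contradiction.

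The main obstacle is the second step: producing so many off-line zeros with the explicit constant $1/1440$. Presumably one writes $S(x,\chi)$ as a Perron-type contour integral of $L(s,\chi) x^s/s$, shifts the contour past $\text{Re}(s) = 3/4$, and identifies the off-line zeros near height $t_0$ as the only mechanism for such an anomalous size; the constant $1440$ then arises from careful accounting of the contributions of these zeros, the residual integral, and the pretentiousness input. This is the technical core of the paper; once it is in place the deduction of Corollary \ref{cor3} is essentially bookkeeping. The new feature relative to Corollary \ref{cor4} is the search over a continuous range of heights $|t_0| \le T$ rather than the single height $t_0 = 0$ that suffices for real characters, which explains both why the zero-free hypothesis must be imposed uniformly in $|\phi| \le T$ and why the upper constraint $T \le (\log q)^{1/200}$ enters, in order to keep the Hal\'asz extraction of $t_0$ (and the subsequent zero-counting) within the claimed region.
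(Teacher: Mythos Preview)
Your contrapositive strategy is correct and is exactly how the paper proceeds. One structural point: Theorem~\ref{main1} takes the large sum $|S(x,\chi)| = x/N$ directly as input and outputs \emph{both} the height $\phi$ (with $|\phi| \le cN$, hence $|\phi| \ll T$) and the zero cluster. The separate Hal\'asz step you describe is therefore redundant---it is carried out inside the proof of Theorem~\ref{main1} via Lemma~\ref{lem2.3}, not something you supply beforehand.

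The one substantive detail you omit is the choice of the parameter $L$, which is literally the entire content of the paper's deduction: take $L = (\epsilon^2 \log q)/4$ in Theorem~\ref{main1}. Since $\log x \ge \epsilon \log q$, the disk $|s-(1+i\phi)| < L\,(\log q)/(\log x)^2 \le 1/4$ is contained in the rectangle $\{\text{Re}(s)\ge \tfrac34,\ |\text{Im}(s)-\phi|\le \tfrac14\}$, and Theorem~\ref{main1} places at least $L/360 = \epsilon^2(\log q)/1440$ zeros there, contradicting the hypothesis at this $\phi$. The conditions $T\le (\log q)^{1/200}$ and $\epsilon \ge (\log q)^{-1/3}$ are exactly what ensure $N\le (\log x)^{1/100}$ and $cN^6 \le L \le (\log x)/2$, so that Theorem~\ref{main1} applies.

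As an aside, your guess about the mechanism behind Theorem~\ref{main1} (Perron integral, contour shift past $\text{Re}(s)=3/4$) is not what the paper does: it combines a Gaussian-smoothed Plancherel identity relating $S(e^y,\chi_\phi)/e^y$ to $L$ on a vertical line (Lemma~\ref{lem2.2}) with a Hadamard-product upper bound for $|L(1-\lambda+it,\chi)|$ in terms of the zeros (Lemma~\ref{Hadlem}). But this is orthogonal to the corollary itself.
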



We now state our main theorems, from which the corollaries above follow as special cases.

\begin{theorem}  \label{main1} 
Let $\chi\pmod q$ be a primitive character, and let $\exp(\sqrt{\log q}) \le x\le \sqrt{q}$ be such that 
$|S(x,\chi)| = x/N$ where $1\leq N\leq  (\log x)^{\frac 1{100}}$.  There exists an absolute positive constant $c>0$ such that for some real number $\phi$ 
with $|\phi|\le c N$, and any parameter $(\log x)/2 \ge L \ge cN^6$, the region 
$$ 
\Big \{ s: \ |s-(1+i\phi)| <  L\frac{\log q}{(\log x)^2} \Big\} 
$$ 
contains at least $L/360$ zeros of the Dirichlet $L$-function $L(s,\chi)$. 
\end{theorem}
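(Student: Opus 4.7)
The proof proceeds by passing from the large value of $S(x,\chi)$ to ``pretentious'' information about $\chi$, and then to a lower bound on the number of zeros of $L(s,\chi)$ clustered near $1+i\phi$.

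\textbf{Step 1 (locating the pretending phase).}  The hypothesis $|S(x,\chi)| = x/N$ with $N$ small is very restrictive.  Invoking the Hal\'asz-type inequality for mean values of multiplicative functions---as developed in the authors' companion paper \cite{GSLCS1}---one extracts a real number $\phi$ with $|\phi| \le cN$ satisfying
\[
\mathbb{D}(\chi, n^{i\phi}; x)^2 := \sum_{p \le x} \frac{1 - \operatorname{Re}(\chi(p) p^{-i\phi})}{p} \ll \log N.
\]
This $\phi$ will serve as the imaginary part of the centre of the disk in the conclusion.

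\textbf{Step 2 (virtual pole of $L(s,\chi)$ at $1+i\phi$).}  A direct computation with the truncated Euler product $\prod_{p\le x}(1-\chi(p)/p^{s})^{-1}$, combined with standard bounds for the tail, converts the pretending condition into the size estimate
\[
\bigl| L(1 + \tfrac{1}{\log x} + i\phi, \chi) \bigr| \ \gg \ \frac{\log x}{N^{O(1)}},
\]
and more generally into the statement that $L(s,\chi)$ grows along the horizontal ray through $1+i\phi$ at almost the same rate as a simple pole at $s = 1+i\phi$ would dictate.  Thus $L(s,\chi)$ carries a ``virtual pole'' at $1+i\phi$ of quantified strength.

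\textbf{Step 3 (from the virtual pole to a zero count).}  Since $L(s,\chi)$ is entire, the would-be pole of Step 2 must be screened by zeros.  To quantify this on the scale $R = L\log q/(\log x)^2$, I would use a contour integral of the shape
\[
\frac{1}{2\pi i} \oint_{|s - (1+i\phi)| = R} \frac{L'}{L}(s,\chi)\, g(s)\, ds \ = \ \sum_{\substack{\rho : L(\rho,\chi)=0\\ |\rho - (1+i\phi)| \le R}} g(\rho),
\]
with $g$ a test function supported near $1+i\phi$ at scale $R$, arranged so that (i) the right-hand side is bounded above by $g(1+i\phi)$ times the desired zero count, and (ii) the left-hand side can be computed via the Hadamard partial-fraction expansion $(L'/L)(s,\chi) = \sum_\rho 1/(s-\rho) + O(\log q(|s|+1))$, together with the lower bound from Step 2.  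The numerical output of this comparison is the required inequality $N(R) \ge L/360$, where $N(R)$ is the number of zeros of $L(s,\chi)$ in the disk of radius $R$ about $1+i\phi$.  An equivalent route trades the choice of $g$ for Littlewood's integrated form of Jensen's formula and compares $\log|L|$ on the boundary circle against $\log|L|$ at an interior point.

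\textbf{Main obstacle.}  Step 3 is the crux.  Jensen's formula in its simplest form yields only \emph{upper} bounds on zero counts, so extracting a lower bound of the precise form $N(R) \ge L/360$ requires a more bespoke argument: one must leverage both the lower bound for $|L|$ at an interior point (Step 2) \emph{and} an upper bound for $|L|$ on parts of the boundary circle lying in the critical strip (from the standard convexity estimate $|L(s,\chi)| \ll (q(|t|+2))^{(1-\sigma)/2}\log(q(|t|+2))$ together with the functional equation), balancing them through a judicious test function to isolate the ``virtual pole'' contribution.  The hypothesis $L \ge cN^6$ is exactly what is needed for the $N^{O(1)}$ loss in Step 2 to be dominated by the main term $L$ in this comparison, and the explicit constant $1/360$ in the conclusion is the quantitative residue of the convexity bound used on the boundary.
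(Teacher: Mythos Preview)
Your Step 1 matches the paper's Lemma 2.3 and is fine.  The real gap is in Steps 2--3.  The lower bound $|L(1+\tfrac{1}{\log x}+i\phi,\chi)|\gg (\log x)/N^{O(1)}$ that you derive in Step 2 is just a restatement of the pretentious distance being small (via the Euler product) and carries no zero information by itself: the same bound holds for $\zeta(s)$ at $s=1+\tfrac{1}{\log x}$, where there are no zeros nearby.  What actually forces zeros is a lower bound for $|L|$ \emph{inside} the critical strip, namely
\[
|L(1-\lambda+i\phi+i\xi,\chi)| \ \gg \ \exp\Bigl(\tfrac{\lambda\log x}{2}\Bigr)
\]
for some small $|\xi|$ and $\lambda$ in the range $cN^6/\log x\le\lambda\le\tfrac12$.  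The paper obtains this (Proposition 2.4) by a mechanism you have not identified: first the Lipschitz estimate \eqref{Hal3} shows that $S(e^y,\chi_\phi)/e^y$ is nearly constant for $y$ near $y_0=\log x$, so a Gaussian-weighted integral of it is $\asymp e^{\lambda y_0/2}/N$; then the Plancherel identity (Lemma \ref{lem2.2}) equates this with a Gaussian-weighted integral of $L(1-\lambda+i\phi+i\xi,\chi)$ along the vertical line, which therefore must be large at some point.

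Once that is in hand, Step 3 is \emph{not} done by Jensen or a contour integral of $L'/L$ with a test function---as you yourself note, Jensen naturally gives upper bounds on zero counts, and your ``bespoke'' workaround remains unspecified.  The paper instead uses Hadamard factorization directly (Lemma \ref{Hadlem}) to bound
\[
|L(1-\lambda+it,\chi)| \ \ll \ \frac{1}{\lambda}\exp\Bigl(\sum_\rho \frac{2\lambda^2}{|1+\lambda+it-\rho|^2}\Bigr),
\]
so the exponential lower bound on $|L|$ forces $\sum_\rho \lambda/|s_0-\rho|^2 \ge (\log x)/4$.  One then splits the sum into far zeros (whose contribution is controlled by the standard $-\mathrm{Re}\,(L'/L)(s_0)\le 1/\lambda+O(1)$ bound, cf.\ \eqref{2.41}--\eqref{2.42}) and near zeros; each near zero contributes at most $1/\lambda$, yielding $\ge \lambda(\log x)/9$ of them.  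The convexity bound is used only to show that the maximum of $|L|$ along the vertical line is attained at a $\xi$ with $|\xi|\le 2\lambda\sqrt{\log q/\log x}$, not to bound $|L|$ on a Jensen circle.
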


When the character $\chi$ has small order, we give the following variant which removes the parameter $\phi$ in Theorem \ref{main1}. 

\begin{theorem} \label{main2}  Let $\chi \pmod q$ be a primitive Dirichlet character of order $k$, and let $x$, and $N$ be 
as in Theorem \ref{main1}.   There exists an absolute constant $c>0$ such that for any parameter $L$ in the range $(\log x)/2 \ge L \ge (cN)^{2k^2}$, the region 
$$ 
\Big \{ s: \ |s-1| <  L\frac{\log q}{(\log x)^2} \Big\} 
$$ 
contains at least $L/400$ zeros of $L(s,\chi)$.  
\end{theorem}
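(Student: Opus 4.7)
The plan is to deduce Theorem \ref{main2} from Theorem \ref{main1} by exploiting the relation $\chi^k = \chi_0$ to show that the shift parameter $\phi$ appearing in Theorem \ref{main1} is forced to be so small that the resulting cluster of zeros can be re-centered at $s = 1$. The enlarged hypothesis $L \ge (cN)^{2k^2}$, in place of $L \ge cN^6$, is precisely what is needed to absorb this re-centering.

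First, I would apply Theorem \ref{main1} with parameter $L' := 9L/10$, noting that $L' \ge cN^6$ whenever $L \ge (cN)^{2k^2}$ and $k \ge 2$. This yields a real $\phi$ with $|\phi| \le cN$ such that the disk $|s - (1+i\phi)| < L'\log q/(\log x)^2$ contains at least $L'/360 = L/400$ zeros of $L(s,\chi)$. Since this disk is contained in $|s - 1| < L\log q/(\log x)^2$ precisely when $|\phi| \le L\log q/(10(\log x)^2)$, it suffices to establish the latter bound on $|\phi|$.

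Second, I would extract from the pretentious framework of \cite{GSLCS1}, on which the proof of Theorem \ref{main1} rests, the auxiliary estimate $\mathbb{D}(\chi, n^{i\phi}; x)^2 \ll \log N$. Invoking $\chi^k = \chi_0$ and the triangle inequality for the pretentious distance gives
\[
\mathbb{D}(\chi_0, n^{ik\phi}; x) \le k\, \mathbb{D}(\chi, n^{i\phi}; x) \ll k\sqrt{\log N}.
\]
Expanding the left-hand side as $\sum_{p \le x,\, p \nmid q}(1-\cos(k\phi \log p))/p + \sum_{p \le x,\, p \mid q} 1/p$, accounting separately for primes dividing $q$, and comparing with the standard lower bound $\sum_{p \le x}(1 - \cos(t\log p))/p \gg \log(1+|t|\log x) - O(1)$, one obtains an estimate of the form
\[
|\phi| \le \frac{N^{O(k^2)}(\log q)^{O(1)}}{k\log x}.
\]

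Finally, I would check that this estimate, combined with $\log q \le (\log x)^2$ (which follows from the hypothesis $x \ge \exp(\sqrt{\log q})$), implies $|\phi| \le L\log q/(10(\log x)^2)$ for all $L \ge (cN)^{2k^2}$ once $c$ is taken to be a sufficiently large absolute constant. The principal obstacle is tracking the constants precisely enough through the previous step to land on the exponent $2k^2$ rather than something larger: the factor of $k$ in the triangle inequality squares to $k^2$ in the effective upper bound on $|\phi|$, and the contribution to $\mathbb{D}(\chi_0, n^{ik\phi}; x)^2$ coming from primes dividing $q$ must be kept subordinate to the $k^2\log N$ term. The hypothesis $x \ge \exp(\sqrt{\log q})$ is used decisively here to keep these contributions in balance.
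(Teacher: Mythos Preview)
Your strategy is essentially the paper's own: bound $|\phi|$ sharply using $\chi^k=\chi_0$ and the triangle inequality for the pretentious distance, then recenter the cluster of zeros at $s=1$. The paper packages the bound on $|\phi|$ as equation \eqref{lem233} of Lemma \ref{lem2.3}, namely $|\phi|\le (cN)^{2k^2}/\log x$, and then reruns the Theorem \ref{main1} argument at the level of Proposition \ref{prop2.4} with this in hand; you instead invoke Theorem \ref{main1} with $L'=9L/10$ and argue by disk containment. These are equivalent manoeuvres.

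There is, however, a genuine gap in your execution. Your displayed bound $|\phi|\le N^{O(k^2)}(\log q)^{O(1)}/(k\log x)$ is too weak: with any positive exponent on $\log q$, the final implication you claim fails. Indeed, disk containment requires $|\phi|\le (L/10)\log q/(\log x)^2$; combining your bound with $L=(cN)^{2k^2}$ forces $(\log q)^{O(1)-1}\log x \ll_{k} N^{O(k^2)}$, which is impossible when $N$ is bounded and $q\to\infty$, regardless of whether you use $\log q\le(\log x)^2$ or $\log q\ge 2\log x$. So the argument as written does not close.

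The fix is that no $\log q$ factor should appear at all. Your concern about primes $p\mid q$ is misplaced: since $\chi(p)=0$ for such $p$, they contribute $\sum_{p\mid q,\,p\le x}1/p$ to ${\mathbb D}(\chi,n^{i\phi};x)^2=M$ itself, and $M\ll\log N$ by Hal\'asz. Hence $\sum_{p\mid q,\,p\le x}1/p\ll\log N$, and the full sum $\sum_{p\le x}(1-\cos(k\phi\log p))/p$ is bounded by $O(k^2\log N)$ with no $\log q$ term. One then obtains $|\phi|\ll (cN)^{2k^2}/\log x$ exactly as in the paper, and your containment step goes through using $\log x\le\tfrac12\log q$.
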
  


Our first corollaries showed that large character sums produced many violations to the GRH.  Our next corollary shows that large character sums force some zeros of $L(s,\chi)$ to lie very close to the $1$-line (refining an old result of Rodosski\v i \cite{Rod} who treated the related problem of determining the smallest prime $p$ with $\chi(p)\neq 1$, for characters of small order; see \cite{Mo} for a lucid exposition).  


\begin{corollary} \label{cor2}   Let $1\ge \epsilon \ge (\log q)^{-\frac{1}{200}}$, and suppose there exists $x\ge q^{\epsilon}$ with $|S(x,\chi)| \ge \epsilon x$.  Then there is an absolute constant $c>0$ such that there is at least one zero of $L(s,\chi)$ inside the region 
\[
\Big \{ s : \ \text{Re}(s) \ge 1- \frac{c}{\epsilon^{8}\log q},  \  |\text{Im}(s)| \le \frac{c}{\epsilon}\Big \} .
\]
If $\chi$ has order $k$ and $\epsilon \ge (\log q)^{-\frac{1}{4k^2}}$ then there is a zero in the region 
$|s-1| \le c/(\epsilon^{2k^2+2}\log q)$.
\end{corollary}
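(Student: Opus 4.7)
The plan is to apply Theorems \ref{main1} and \ref{main2} with the natural choice $N = x/|S(x,\chi)| \le 1/\epsilon$ and with the parameter $L$ taken at its smallest admissible value, so that the disk of zeros the theorems produce is as small as possible.

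Before applying the theorems I must check their standing hypotheses. From $\epsilon \ge (\log q)^{-1/200}$ and $x \ge q^\epsilon$ one has $\log x \ge (\log q)^{199/200}$, which comfortably exceeds $\sqrt{\log q}$, and $N \le 1/\epsilon \le (\log q)^{1/200} \le (\log x)^{1/100}$ for $q$ large. The upper bound $x \le \sqrt{q}$, however, is not automatic: if $x > q$ then the periodicity of $\chi$ gives $S(x,\chi) = S(x \bmod q,\chi)$, reducing us to $x < q$; and for $\sqrt{q} < x \le q$ a Polya--Fourier-type argument is needed to descend to some $y \in [q^{\epsilon}, \sqrt{q}]$ still with $|S(y,\chi)|/y$ of order $\epsilon$. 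This descent is the only genuinely technical preliminary.

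For the general case, apply Theorem \ref{main1} with $L = c_0 N^6$, its smallest admissible value. The theorem produces $|\phi| \le c_1 N \le c_1/\epsilon$ and a zero $\rho$ of $L(s,\chi)$ inside the disk of radius
\[
L \frac{\log q}{(\log x)^2} \le \frac{c_0 N^6 \log q}{\epsilon^2 (\log q)^2} \le \frac{c_0}{\epsilon^8 \log q}
\]
centred at $1+i\phi$. Hence $\text{Re}(\rho) \ge 1 - c_0/(\epsilon^8 \log q)$ and $|\text{Im}(\rho)| \le |\phi| + c_0/(\epsilon^8 \log q) \ll 1/\epsilon$, the second summand being negligible since $\epsilon^7 \log q \to \infty$ under the assumption on $\epsilon$.

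For the order $k$ case, apply Theorem \ref{main2} with $L = (c_0 N)^{2k^2}$. The strengthened hypothesis $\epsilon \ge (\log q)^{-1/(4k^2)}$ is exactly what is required to keep $L \le (\log x)/2$. Since the disk supplied by Theorem \ref{main2} is centred at $s = 1$, the same computation gives a zero $\rho$ of $L(s,\chi)$ with
\[
|\rho - 1| \le (c_0 N)^{2k^2} \frac{\log q}{(\log x)^2} \le \frac{c_0^{2k^2}}{\epsilon^{2k^2+2}\log q}.
\]
The main obstacle, as noted, is the Polya-type reduction for $x > \sqrt{q}$; once that is in hand the corollary is a matter of bookkeeping from the two main theorems with the displayed choices of $L$.
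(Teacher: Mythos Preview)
Your approach is exactly the paper's: it simply says that Corollary~\ref{cor2} follows by taking $L=c\epsilon^{-6}$ in Theorem~\ref{main1} and $L=(c/\epsilon)^{2k^2}$ in Theorem~\ref{main2}, which is precisely your choice of the minimal admissible $L$, and your bookkeeping for the radius and for $|\phi|$ is correct.

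One remark on the ``technical preliminary'' you flag. The paper does not discuss the restriction $x\le\sqrt{q}$ at all, and no elaborate descent is needed: the P\'olya--Vinogradov inequality already gives $\epsilon x\le |S(x,\chi)|\ll\sqrt{q}\log q$, so under the hypothesis $\epsilon\ge(\log q)^{-1/200}$ one automatically has $\log x\le\tfrac12\log q+O(\log\log q)$. This is enough either to feed directly into the proof of Theorem~\ref{main1} (which is robust to $x=q^{1/2+o(1)}$), or, if one insists on the stated range, to pass to $x'=\sqrt{q}$ via the Lipschitz estimate \eqref{Hal3} with only an $o(1)$ loss in $\epsilon$. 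So the obstacle you identify is real but mild, and is resolved by P\'olya--Vinogradov rather than a separate Fourier argument.
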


A classical argument of Backlund (see Theorem 13.5 of \cite{Ti}) could be adapted to show that if 
almost all the zeros of $L(s,\chi)$ in intervals of length $1$ have real part $\le 1/2+ \epsilon$ then 
the Lindel{\" o}f hypothesis for $L(s,\chi)$ would follow.  From such a bound, one could obtain 
strong estimates for character sums.   Our results provide a sharper version of such ideas of 
Backlund and Rodosski\v i, by finding zeros even closer to the $1$-line, and localizing their imaginary 
parts.  A key ingredient in our argument is work on mean-values of multiplicative functions, in particular 
the feature that such mean values vary slowly (see Lemma 3.3 below). 

\medskip 

By a compactness argument, another consequence of our work is that if \eqref{Conj} fails for $x\ge q^{\epsilon}$ 
for infinitely many characters of bounded order, then one can find a sequence of $L$-functions with arbitrarily many pinpointed zeros near the $1$-line.   

\begin{theorem} \label{thm1.6}  Fix an integer $k \ge 2$ and a constant $\eta >0$.  Suppose there is an infinite sequence of distinct primitive characters $\chi_j \pmod{q_j}$ of order $k$ for which $| \sum_{n\le x} \chi_j(n)| \ge \eta x$ 
for some $x\ge q_j^{\eta}$.  There exists an infinite sequence $z_1$, $z_2$, $\ldots$, of complex numbers 
with $|z_n|+1 \le |z_{n+1}|$ with the following property:  There is a sequence of primitive characters $\psi_j \pmod {r_j}$ (in fact a subsequence of the original sequence $\chi_j$) with $L(s_{\ell}, \psi_j)=0$ for $1\le \ell \le j$ and 
some $s_{\ell}$  satisfying 
$$ 
s_{\ell}  = 1+ \frac{z_{\ell} +o(1)}{\log r_j},
$$ 
and the $o(1)$ term tends to zero as $r_j \to \infty$.  
\end{theorem}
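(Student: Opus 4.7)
The plan is to iterate Theorem~\ref{main2} along the given sequence $\chi_j$ and extract the infinite sequence $\{z_n\}$ together with the subsequence $\{\psi_j\}$ by Bolzano--Weierstrass compactness followed by Cantor diagonalization, all in the rescaled coordinate $w = (s-1)\log q$ near $s = 1$.

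First, since each $\chi_j$ has order $k$ with $|S(x_j,\chi_j)| \ge \eta x_j$ for some $x_j \ge q_j^{\eta}$, the quantity $N := x_j/|S(x_j,\chi_j)|$ is at most $1/\eta$, so the threshold $(cN)^{2k^2}$ in Theorem~\ref{main2} depends only on $\eta$ and $k$. Applying Theorem~\ref{main2} with any $L \in [(c/\eta)^{2k^2},(\log x_j)/2]$ yields at least $L/400$ zeros of $L(s,\chi_j)$ in $|s-1| < L\log q_j/(\log x_j)^2$; using $\log x_j \ge \eta \log q_j$, these zeros lie in the rescaled disk $|w| \le L/\eta^{2}$. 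I would then build $\{z_n\}$ together with nested subsequences $\{\chi_j^{(n)}\}$ inductively. For $n=1$, apply Theorem~\ref{main2} with a constant $L = L_1$; extract a convergent subsequence of one of the (at least $L_1/400$) rescaled zeros in the bounded disk $|w|\le L_1/\eta^{2}$ and call its limit $z_1$. For the inductive step, having constructed $z_1,\ldots,z_{n-1}$ along $\{\chi_j^{(n-1)}\}$, choose $L_n(j)$ growing slowly to infinity with $j$ (of order $\log\log q_j$) large enough that $L_n(j)/400$ exceeds a zero-density bound for the number of rescaled zeros of $L(s,\chi_j^{(n-1)})$ in the bounded disk $|w|\le |z_{n-1}|+1$; this forces some rescaled zero $w_n^{(j)}$ to satisfy $|w_n^{(j)}| > |z_{n-1}|+1$, and a further Bolzano--Weierstrass extraction in a bounded annulus yields a convergent subsequence with limit $z_n$ satisfying $|z_n|\ge |z_{n-1}|+1$. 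The diagonal $\psi_j := \chi_j^{(j)}$ then has the required property: for each $\ell$ and each $j\ge \ell$, $\psi_j$ lies in $\{\chi_j^{(\ell)}\}$, and the rescaled zeros along this subsequence converge to $z_{\ell}$, giving zeros $s_\ell = 1 + (z_\ell + o(1))/\log r_j$ of $L(s,\psi_j)$.

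The main obstacle is the zero-density estimate: one must show that the number of zeros of $L(s,\chi)$ in $|w| \le R$ is $\ll R + \log\log q$ uniformly in $q$. I would derive this from Jensen's formula applied to $L(s,\chi)$ on a disk of radius $\asymp R/\log q$ centered slightly to the right of $s=1$ (at, say, $s_0 = 1 + c_1/\log q$), using the convexity bound $|L(s,\chi)| \ll (q(1+|t|))^{(1-\sigma)/2}\log q$ on the boundary together with the elementary lower bound $|L(s_0,\chi)| \gg 1/\log q$ at the center. A secondary subtlety is keeping the new zeros $w_n^{(j)}$ in a bounded region of the $w$-plane so that Bolzano--Weierstrass applies, rather than letting them escape to infinity as $j\to\infty$; this is managed by choosing $L_n(j)$ to grow only slowly and combining with the density bound applied to annular regions $R_1 < |w| \le R_2$ to locate a bounded annulus in which infinitely many $j$ contribute a zero.
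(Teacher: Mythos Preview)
Your overall plan---iterate Theorem~\ref{main2}, extract convergent subsequences by Bolzano--Weierstrass, then diagonalize---is exactly the paper's strategy.  The difference, and the place where your argument develops a genuine gap, is in the zero-density estimate.

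The paper does not use Jensen's formula.  Instead it uses \eqref{2.41} and \eqref{2.42}: taking $s_0=1+R/\log q$ one has
\[
\sum_{\rho}\text{Re}\,\frac{1}{s_0-\rho}\le \tfrac12\log q+\frac{\log q}{R}+O(1),
\]
and each zero with $|(\rho-1)\log q|\le R$ contributes at least $(\log q)/(4R)$ to the left side.  This yields that the number of rescaled zeros in $|w|\le R$ is $\ll R$ \emph{uniformly in $q$}.  Your Jensen argument, by contrast, picks up a factor $(\log q)^2$ from the ratio of the convexity upper bound to the trivial lower bound $|L(s_0,\chi)|\gg 1/\log q$, and so only gives $\ll R+\log\log q$.

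This matters.  With the uniform $O(R)$ bound, at stage $n$ one may take $L=L_n$ a \emph{constant} (depending only on $|z_{n-1}|$, $\eta$, $k$) large enough that $L_n/400$ exceeds the number of zeros in $|w|\le |z_{n-1}|+1$; Theorem~\ref{main2} then forces a zero into the \emph{bounded} annulus $|z_{n-1}|+1<|w|\le L_n/\eta^2$, and Bolzano--Weierstrass applies immediately.  With your $\log\log q$ term, you are pushed to take $L_n(j)\asymp\log\log q_j$, and then the new zero is only known to live in an annulus of outer radius $\asymp\log\log q_j$.  Your proposed fix---pigeonhole over dyadic annuli---does not close: the upper bound $\ll R+\log\log q_j$ in any fixed disk is of the same order as the $\gg\log\log q_j$ zeros you have produced, so nothing prevents all of them from sitting near the outer edge, with modulus $\to\infty$ as $j\to\infty$.

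The repair is simple: replace Jensen by the log-derivative argument above to obtain the $q$-uniform $O(R)$ bound, drop the dependence of $L_n$ on $j$, and the rest of your outline goes through exactly as written.  (The diagonalization you describe is the standard way to package the nested subsequences; the paper leaves this implicit.)
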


 Theorem \ref{thm1.6} generalizes and gives a soft version of an unpublished observation of Heath-Brown.  
 Heath-Brown observed that if there is an infinite sequence of primes $q$ for which the least quadratic non-residue mod $q$ is $\geq q^{1/4\sqrt{e}+o(1)}$ then one can locate precisely  many zeros of $L(s,(\frac{\cdot}q))$.  
 A precise version of his result, as described in Appendix 2 of \cite{DMV}, is as follows.  Consider the zeros of 
\[
H(z)= \frac 2z \int_{1/\sqrt{e}}^1 (1-e^{-zu}) \frac{du}u.
\]
These zeros lie in the half plane Re$(z) <0$, and occur in conjugate pairs.  Let $z_k$ denote the sequence of these zeros with positive imaginary parts, and arranged in ascending order of the imaginary part.  
For each $k\geq 1$, if $q$ is sufficiently large (and the least quadratic non-residue is as large as $q^{\frac{1}{4\sqrt{e}}+o(1)}$),  then there is a zero of $L(s,(\frac{\cdot}{q}))$ at 
\[
 s=1+\frac{4z_k+o(1)}{\log q}, 
\]
and at its complex conjugate.  In this situation, one can also describe the zeros $z_k$ precisely:\    arguing as in Lemma 2 of \cite{DMV} gives that
\begin{equation} \label{ZeroLoc1}
 z_{k}= -\log (\pi k) + 2\pi i(k+\tfrac 14)  +o(1),
\end{equation}
which corresponds well to the data given at the end of \cite{DMV}.

Recently Banks  and Makarov \cite{BM} generalized Heath-Brown's observation, and 
showed that if there is a sequence of quadratic characters with a certain prescribed smooth way in 
which \eqref{Conj} fails, then one can pinpoint the zeros near $1$ of the corresponding $L$-functions.   The smoothness hypothesis that they assume, permits them to locate the zeros in a form similar to 
\eqref{ZeroLoc1} (see Proposition 3.1 of \cite{BM}).   In contrast, our Theorem \ref{thm1.6} is 
softer but holds more generally; it would be interesting if some more precise version of  Theorem \ref{thm1.6} 
incorporating behavior as in \eqref{ZeroLoc1} could be established.   We  note here the recent interesting work of Tao \cite{Tao} relating Vinogradov's conjecture to the Elliott-Halberstam conjectures on the distribution of primes (and more general sequences) in progressions.





We also take this opportunity to record some other observations on large character sums.   
In \cite{GSLCS2}, we proved that if $\chi_1,\chi_2,\chi_3$ are three (not necessarily distinct) characters $\pmod q$ which have large maximal character sums (that is, if  $\max_{x} |\sum_{n\leq x}
\chi_j(n) |\gg \sqrt{q} \log q$ for $j=1,2,3$, which is the largest size permitted by 
the Polya-Vinogradov theorem) then there exists some $x$ for which
\[ 
\Big|\sum_{n\leq
x_0} (\chi_1\chi_2\chi_3)(n) \Big|\gg \sqrt{q} \log q.
\]
We will prove an analogous (but much easier) result with respect to \eqref{Conj}.

\begin{corollary}\label{MultChars}  Suppose $\chi_1$, and $\chi_2$ are Dirichlet characters $\pmod q$, 
such that for some $x_1$, $x_2$ and some $\eta>0$ we have  (for $j=1$, $2$) 
$$
\Big|   \sum_{n\leq x_j} \chi_j(n)\Big| \ge \eta  x_j.
$$
Then, with $\xi = c\eta^6$ for a suitable absolute constant $c>0$, there exists $x\geq (\min (x_1,x_2))^\xi$ with 
$$
\Big|  \sum_{n\leq x} (\chi_1\chi_2)(n) \Big| \ge \xi x .
$$
\end{corollary}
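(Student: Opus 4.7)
The plan is to exploit the ``pretentious'' structure that underlies large character sums, by combining the reverse direction of Halász's inequality with its forward direction on the product character. First, I would use that the hypothesis $|\sum_{n \le x_j} \chi_j(n)| \ge \eta x_j$ forces each $\chi_j$ to be close, in the Granville--Soundararajan pretentious metric, to a pure Archimedean twist $n^{it_j}$. Concretely, the reverse Halász principle (as developed in \cite{GSLCS1}) supplies real numbers $t_1,t_2$ with
\[
|t_j| \ll \eta^{-1}, \qquad \mathbb{D}(\chi_j, n^{it_j}; x_j)^2 := \sum_{p \le x_j} \frac{1-\operatorname{Re}(\chi_j(p)p^{-it_j})}{p} \ll \log(1/\eta).
\]
The control $|t_j| \ll \eta^{-1}$ is essential and comes from the fact that $\sum_{n\le x} n^{it}$ has size comparable to $x/(1+|t|)$, so pretentiousness to $n^{it_j}$ with $|t_j|$ much larger than $1/\eta$ would contradict the hypothesis on $\chi_j$.

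Second, I would propagate this pretentiousness through multiplication. By the triangle inequality for $\mathbb{D}$, for every $y \le \min(x_1,x_2)$,
\[
\mathbb{D}\bigl(\chi_1\chi_2,\, n^{i(t_1+t_2)}; y\bigr) \le \mathbb{D}(\chi_1, n^{it_1}; y) + \mathbb{D}(\chi_2, n^{it_2}; y) \ll \sqrt{\log(1/\eta)},
\]
and with $t:=t_1+t_2$ we still have $|t| \ll \eta^{-1}$. Thus $\chi_1\chi_2$ is pretentious to $n^{it}$ for some $t$ of small modulus, uniformly across all scales $y \le \min(x_1,x_2)$.

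Third, I would invoke the forward direction: small pretentious distance together with moderate $|t|$ yields a lower bound for partial sums. Using the Selberg--Delange style asymptotic for $S(y, f)$ when $f$ is pretentious to $n^{it}$, one has a main term of magnitude
\[
\gg \frac{y}{1+|t|}\,\exp\!\bigl(-\mathbb{D}(\chi_1\chi_2,n^{it};y)^2\bigr) \gg \frac{y}{1+|t|}\, \eta^{O(1)} \gg \eta^{O(1)}\, y,
\]
provided $y$ is chosen in a suitable sub-range of $[1,\min(x_1,x_2)]$ (so that the asymptotic is applicable and the Euler product factor does not vanish spuriously). Tracking the exponents through Steps~1--3 and absorbing constants gives a lower bound of shape $\xi y$ with $\xi = c\eta^{6}$, at some $y \ge (\min(x_1,x_2))^{\xi}$ once we restrict to the admissible sub-range.

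The main obstacle is Step~3: converting pretentious \emph{closeness} into a \emph{lower} bound for partial sums is genuinely more delicate than the upper-bound direction of Halász, because the Euler product factor $\prod_{p\le y}(1-1/p)(1-\chi_1\chi_2(p)/p^{1+it})^{-1}$ may be artificially small for unfortunate choices of $y$. The remedy is an averaging (or mean-value) argument over $y$ in a dyadic range, together with the observation that the upper bound on $\mathbb{D}^2$ is itself uniform in $y$ up to $\min(x_1,x_2)$, which suffices to locate a good $y$. Once this structural point is secured, tuning the constants gives the stated exponent $\eta^6$.
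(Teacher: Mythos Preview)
Your proposal is correct and follows essentially the same three-step architecture as the paper: reverse Hal\'asz to extract $t_j$ with $|t_j|\ll 1/\eta$ and $\mathbb{D}(\chi_j,n^{it_j};x_j)^2\ll \log(1/\eta)$; the triangle inequality for $\mathbb{D}$ to control $\mathbb{D}(\chi_1\chi_2,n^{i(t_1+t_2)};X)$ with $X=\min(x_1,x_2)$; and a ``forward'' step converting small distance back into a large partial sum at some scale. The paper packages this last step as Proposition~\ref{prop6.1}, whose proof is precisely the averaging argument you anticipate: one lower-bounds $|F(1+\lambda/\log X+i\phi)|\gg (\log X/\lambda)e^{-M}$ via the distance, writes $F$ as $(1+\delta+i\phi)\int_1^\infty S(y,f)y^{-2-\delta-i\phi}\,dy$, and shows that if $|S(y,f)|\le e^{-\lambda}y$ throughout $[X^{1/(\lambda e^\lambda)},X]$ the integral is too small. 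Your ``Selberg--Delange main term'' formulation is not what is actually used (and, as you note, would run into the Euler-product vanishing issue), but your proposed remedy---averaging over scales---is exactly the mechanism in Proposition~\ref{prop6.1}; tracking $\lambda\approx 5\log(1/\eta)$ through that argument is what produces both the exponent $\xi=c\eta^6$ in the range $x\ge X^\xi$ and the lower bound $\xi x$.
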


 Corollary \ref{MultChars} implies, for example, that if $\chi \pmod{p}$ is a character of order $4$ for which
\eqref{Conj} fails for $x\ge p^{\epsilon}$, then \eqref{Conj} also fails for the Legendre symbol $\pmod p$ 
for some suitably large $x$.  We discuss Corollary \ref{MultChars} and related results in Section 6 below.

Given a prime $q$,  Burgess's theorem guarantees that there are $\sim x/2$ of quadratic residues and $\sim x/2$ quadratic non-residues $\pmod q$ up to $x$, provided $x\ge q^{\frac 14+o(1)}$.   One of the main results in \cite{GSSpectrum} shows that if $x$ is large enough, then at least $17.15\%$ of the integers below $x$ are 
quadratic residues $\pmod q$ (uniformly for all primes $q$).  In the `Vinogradov range' $q^{1/4\sqrt{e}+o(1)} \le x \le q$, Banks {\sl et al} (see Theorem 2.1 of \cite{BGHS})  showed that a positive proportion of the integers below $x$ are quadratic non-residues $\pmod q$.   We give  the following strengthening of their work (as mentioned in \S 4 of \cite{BGHS}).  

\begin{corollary} \label{quadres}  Let $q$ be a large prime, and suppose $1/\sqrt{e} \le u \le 1$.  The number of 
quadratic non-residues $\pmod q$ up to $x=q^{u/4}$ is at least 
$$ 
\Big( \min\Big(\delta_0, \frac 14- (\log u)^2 \Big) + o(1) \Big) x,
$$ 
where 
$$ 
\delta_0 = 1-\log (1+\sqrt{e}) + 2\int_{1}^{\sqrt{e}} \frac{\log t}{t+1} dt = 0.1715\ldots .
$$ 
\end{corollary}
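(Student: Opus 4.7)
I would rewrite the problem as an upper bound on $S(x,\chi) = \sum_{n \le x}\chi(n)$ for the Legendre symbol $\chi = \big(\tfrac{\cdot}{q}\big)$, and then establish two competing bounds whose pointwise minimum yields the claim. Since $q$ is prime and $x = q^{u/4} < q$, the count of quadratic non-residues in $[1,x]$ equals $\tfrac12(x - S(x,\chi)) + O(1)$, so the statement is equivalent to
\begin{equation*}
S(x,\chi) \le \Big(1 - 2\min\Big(\delta_0,\ \tfrac14 - (\log u)^2\Big) + o(1)\Big)x.
\end{equation*}

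The $\delta_0$ bound is the uniform extremal mean-value theorem of \cite{GSSpectrum}: for any completely multiplicative $f : \SN \to [-1,1]$, $\tfrac1x\sum_{n\le x}f(n) \le 1 - 2\delta_0 + o(1)$, with $\delta_0$ the optimum of a variational problem independent of $q$. Applied to $\chi$---which takes values in $\{-1,0,+1\}$ with $0$ occurring only at multiples of $q > x$---this gives the first upper bound uniformly in $q$. For the $\tfrac14 - (\log u)^2$ bound I would follow the Vinogradov/Buchstab-style combinatorial analysis of \cite{BGHS}. Setting $y = q^{1/(4\sqrt e)}$ and $v := \log x/\log y = u\sqrt e \in [1,2]$, every $n \le x$ is either $y$-smooth, or factors uniquely as $n = pm$ with $p > y$ prime and $m \le x/p \le y$ automatically $y$-smooth. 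The relevant worst case is when every prime $p \le y$ is a residue---otherwise a single small non-residue prime $p_0 \le y$ already supplies $\gg x/p_0$ non-residues, far in excess of the required bound. In that case Dickman's $\Psi(x,y) \sim x(1-\log v)$ together with the unique factorization $n = pm$ yields
\begin{equation*}
S(x,\chi) = x(1-\log v) + x\, T + o(x), \qquad T := \sum_{y < p \le x}\frac{\chi(p)}{p},
\end{equation*}
so that the target bound reduces, after an elementary rearrangement, to $T \le \log v(2\log v - 1) + o(1)$, which is non-positive on $v \in [1,\sqrt e]$.

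\textbf{Main obstacle.} The heart of the argument is this bound on $T$. A natural approach combines the Vinogradov hypothesis $\sum_{p \le y}\chi(p)/p = \log\log y + O(1)$ with the identity $\sum_p \chi(p)/p = \log L(1,\chi) + O(1)$, and a tail bound on $\sum_{p > x}\chi(p)/p$ via partial summation from Burgess's bound for $\sum_{p \le t}\chi(p)$ in $t \ge q^{1/4+\epsilon}$, plus Polya--Vinogradov in the intermediate range $x \le t \le q^{1/4}$. The delicate point is precisely this intermediate range, where Burgess is not yet available; here one can either use a weighted partial-summation trick, or more cleanly appeal to Theorem~\ref{main2} (applied to the quadratic character $\chi$, so $k=2$) to localize the zeros of $L(s,\chi)$ near $s = 1$ sharply enough for the tail bound to go through. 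Taking the minimum of the two upper bounds on $S(x,\chi)/x$ completes the proof.
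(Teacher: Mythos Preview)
Your proposal has two genuine gaps.

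\medskip
\noindent\textbf{The $\delta_0$ bound.} You claim that \cite{GSSpectrum} gives, for any completely multiplicative $f:\SN\to[-1,1]$, the uniform upper bound $\tfrac1x\sum_{n\le x}f(n)\le 1-2\delta_0+o(1)$. This is false: take $f\equiv 1$. What \cite{GSSpectrum} proves is the \emph{lower} bound $\tfrac1x\sum_{n\le x}f(n)\ge \delta_1+o(1)$ with $\delta_1=-(1-2\delta_0)$; the corresponding statement about integers with $f(n)\ne 1$ is that at least $\delta_0 x$ of them are \emph{residues}, not non-residues. To get the upper bound $S(x^\alpha,\chi)\le(|\delta_1|+o(1))x^\alpha$ one needs an extra hypothesis, namely $\tau(\alpha)=\sum_{p\le x^\alpha}(1-\chi(p))/p\ge 1$, and in the paper this is deduced from Burgess at $x=q^{1/4}$ via an inclusion--exclusion argument (Proposition~3.6 and Theorem~5.1 of \cite{GSSpectrum}). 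So the $\delta_0$ part is not ``uniform in $q$'' in the sense you assert; it already relies on Burgess.

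\medskip
\noindent\textbf{The $\tfrac14-(\log u)^2$ bound.} Your worst-case reduction is not justified: a single non-residue prime $p_0\le y$ does not ``supply $\gg x/p_0$ non-residues'', since among the multiples $mp_0\le x$ only those with $\chi(m)=+1$ are non-residues, and moreover $p_0$ could be as large as $y$, giving at best $O(x/y)$ such multiples, which is $O(1)$ when $u$ is near $1/\sqrt e$. More seriously, even granting the reduction, you do not prove the required inequality $T\le \log v\,(2\log v-1)+o(1)$. The appeal to Theorem~\ref{main2} is backwards: that theorem takes a \emph{large} character sum as hypothesis and outputs zeros near $s=1$; it says nothing that would bound a sum of $\chi(p)/p$ over $y<p\le x$.

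\medskip
The paper avoids both difficulties by proving a single general statement (Proposition~7.1): if $f$ is completely multiplicative with $-1\le f\le 1$ and $\sum_{n\le x}f(n)=o(x)$, then for $1/\sqrt e\le\alpha\le 1$ one has
\[
\frac{1}{x^\alpha}\Big|\sum_{n\le x^\alpha}f(n)\Big|\le \max\Big(|\delta_1|,\ \tfrac12+2(\log\alpha)^2\Big)+o(1).
\]
Burgess supplies the hypothesis at $x=q^{1/4}$; then one passes to an auxiliary $g$ with $g(p)=1$ on small primes, shows $\tau(1)\ge 1+o(1)$ and hence $\tau(\alpha)\ge 1+2\log\alpha+o(1)$, and handles the two cases $\tau(\alpha)\ge 1$ and $\tau(\alpha)<1$ via Theorem~5.1 and Proposition~3.6 of \cite{GSSpectrum} respectively. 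This is where the quadratic $\tfrac12+2(\log\alpha)^2$ actually comes from: it is the bound $1-\tau+\tau^2/2$ maximised over $\tau\in[1+2\log\alpha,1]$.
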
  



\section{Mean values of Multiplicative functions}\label{multfns}

\noindent In this section, we recall some results from the theory of mean-values of multiplicative functions.   
Let $f$ be a multiplicative function for which each $|f(n)|\leq 1$, and write $F(s) = \sum_{n=1}^{\infty} f(n)n^{-s}$. 
Define the (square of the) ``distance'' between two such functions $f$ and $g$,  
\[
\mathbb D(f,g;x)^2 := {\sum_{p\leq x} \frac{1-\text{Re }f(p)\overline{g(p)}}p  }, 
\]
and this distance function satisfies the triangle inequality
\begin{equation} \label{TrIneq}
 \mathbb D(f,g;x) + \mathbb D(g,h;x) \geq \mathbb D(f,h;x).
\end{equation}
Further, the distance function is related to the Dirichlet series $F(s)$ via the relation 
\begin{equation} 
\label{Ftodist}
 F\Big( 1 +\frac 1{\log x} +i t \Big) \asymp \log x \exp(-\mathbb D(f,n^{it};x)^2).
\end{equation} 

Given $x$, let $\phi=\phi_f(x)$ be a real number in the range $|t|\le \log x$ where $|F(1+1/\log x+it)|$ attains its 
maximum.  Put $M:=M_f(x)=\mathbb D(f,n^{i\phi};x)^2$.  The first fact that we need is Hal\'asz's Theorem (see, e.g., Theorem 2b of \cite{GSDecay}), which  gives
\begin{equation} \label{Hal1}
 \frac 1x \sum_{n\leq x} f(n) \ll \frac{(M+1)e^{-M}}{1+|\phi|} + \frac{1}{(\log x)^{2-\sqrt{3}+o(1)}} .
\end{equation}
Define $f_\phi(n):=f(n)/n^{i\phi}$.  We next need a relation between the mean value of $f$ and the mean value of $f_\phi$.  
From Lemma 7.1 of \cite{GSDecay}, we quote the relation  
\begin{equation} \label{Hal2}
 \frac 1x \sum_{n\leq x} f(n) = \frac{x^{i\phi}}{1+i\phi}\cdot  \frac 1x \sum_{n\leq x} f_\phi(n) +O\Big(
   \frac{\log \log x}{\log x} \exp\Big( \sum_{p\le x} \frac{|1-f(p)|}{p}\Big)\Big).
\end{equation}
Finally from Theorem 4 of \cite{GSDecay} (refining work of Elliott \cite{Elliott}) we require the following Lipschitz estimate 
showing that the mean values of $f_{\phi}$ vary slowly:   for any $\sqrt{x} \le z\le x^2$,  we have 
\begin{equation} \label{Hal3}
 \frac 1x \sum_{n\leq x} f_\phi(n) -  \frac 1{z} \sum_{n\leq z} f_\phi(n) \ll  \left(\frac{1+|\log x/z|}{\log x} \right)^{1-\frac{2}{\pi}+o(1)} .
\end{equation}

\section{Large character sums and  zeros off the critical line} 

\noindent Let $\chi \pmod q$ denote a primitive character.  We shall
 make use of the Hadamard factorization formula (see \cite{Dav}) 
 \begin{equation} 
 \label{Hadamard} 
 \xi(s,\chi) = \Big(\frac{q}{\pi} \Big)^{\frac{s+{\mathfrak a}}{2} } \Gamma\Big(\frac{s+{\mathfrak a}}{2} \Big) 
 L(s, \chi) = e^{A+Bs} \prod_{\rho} \Big(1-\frac{s}{\rho}\Big) e^{s/\rho}, 
 \end{equation} 
 where ${\mathfrak a}=(1-\chi(-1))/2$, and $\rho$ ranges over the non-trivial zeros of $L(s,\chi)$ 
 and $A$ and $B$ are constants (depending on $\chi$) with 
 \begin{equation} 
 \label{Hadamard2} 
 \text{Re } B = - \sum_{\rho} \text{Re }\frac{1}{\rho}.
 \end{equation}   

\begin{lemma}\label{Hadlem}  Let $\tfrac 12 \ge \lambda >0$ be a real number, and 
let $t$ be a real number.  Then 
$$ 
|L(1-\lambda+ it,\chi) |\ll \frac{1}{\lambda} \exp\Big( \sum_{\rho} \frac{2\lambda^2}{|1+\lambda+it -\rho|^2}\Big). 
$$ 
\end{lemma}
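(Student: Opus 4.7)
The plan is to compare $|L(1-\lambda+it,\chi)|$ with $|L(1+\lambda+it,\chi)|$, the latter being bounded by $\zeta(1+\lambda) \ll 1/\lambda$ from the Dirichlet series. The comparison is carried through the completed $L$-function $\xi(s,\chi)$ using the Hadamard product \eqref{Hadamard2}, which gives
\[
\log|\xi(1-\lambda+it,\chi)| - \log|\xi(1+\lambda+it,\chi)| = \sum_\rho \log\left|\frac{1-\lambda+it-\rho}{1+\lambda+it-\rho}\right|.
\]
Writing $\rho = \beta+i\gamma$, a direct expansion gives the identity $|1-\lambda+it-\rho|^2 = |1+\lambda+it-\rho|^2 - 4\lambda(1-\beta)$, and since $\beta<1$ the elementary inequality $\log(1-u)\le -u$ applied to half the log-ratio yields the upper bound $-2\lambda \sum_\rho (1-\beta)/|1+\lambda+it-\rho|^2$ on the right side.

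Converting from $\xi$ to $L$ through $|\xi(s,\chi)| = (q/\pi)^{(\text{Re}(s)+\mathfrak a)/2}|\Gamma((s+\mathfrak a)/2)||L(s,\chi)|$ introduces an additional factor $\lambda\log(q/\pi) + \log|\Gamma(s_1)/\Gamma(s_2)|$, with $s_j = (1\pm\lambda+\mathfrak a+it)/2$. To handle this, I take the real part of the logarithmic derivative of \eqref{Hadamard} at $s=1+\lambda+it$. Using $\text{Re}(B) = -\sum_\rho \text{Re}(1/\rho)$ to cancel the $B$ contribution, one obtains
\[
\sum_\rho \text{Re}\frac{1}{1+\lambda+it-\rho} = \text{Re}\frac{L'}{L}(1+\lambda+it,\chi) + \tfrac12\log(q/\pi) + \tfrac12 \text{Re}\frac{\Gamma'}{\Gamma}(s_1).
\]
The Dirichlet series gives $|L'/L(1+\lambda+it,\chi)|\le |\zeta'/\zeta(1+\lambda)| = O(1/\lambda)$, and Stirling's formula yields $\log|\Gamma(s_1)/\Gamma(s_2)| = \lambda\, \text{Re}(\Gamma'/\Gamma)(s_1) + O(1)$ for $\lambda\le 1/2$. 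Multiplying the identity above by $2\lambda$ and rearranging therefore gives
\[
\lambda\log(q/\pi) + \log|\Gamma(s_1)/\Gamma(s_2)| = 2\lambda \sum_\rho \frac{1+\lambda-\beta}{|1+\lambda+it-\rho|^2} + O(1).
\]

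Combining these pieces, and noting that $(1+\lambda-\beta)-(1-\beta) = \lambda$, the two sums over zeros collapse into $2\lambda\cdot\lambda \sum_\rho 1/|1+\lambda+it-\rho|^2$; together with the bound $\log|L(1+\lambda+it,\chi)| \le -\log\lambda + O(1)$ this yields the claimed inequality. The main obstacle is the bookkeeping around the Stirling approximation: one must verify that the gamma factor arising from the $\xi \to L$ conversion matches, to within $O(1)$, the gamma term produced by the $L'/L$ identity, so that the large $\lambda\log q$ contribution and the $|t|$-dependent pieces cancel cleanly and do not survive in the final estimate.
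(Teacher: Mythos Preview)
Your proof is correct and follows essentially the same approach as the paper: compare $|L(1-\lambda+it,\chi)|$ to $|L(1+\lambda+it,\chi)|$ via the Hadamard product for $\xi$, bound each factor $|s_1-\rho|/|s_0-\rho|$ by $\exp(-2\lambda(1-\beta)/|s_0-\rho|^2)$, and use the real part of $L'/L$ at $s_0$ (bounded by $O(1/\lambda)$) to cancel the $\log q$ contribution, leaving the $2\lambda^2\sum_\rho |s_0-\rho|^{-2}$ term. The only cosmetic difference is that the paper applies Stirling at the outset to replace the $\Gamma$-ratio by $(q(1+|t|))^{\lambda}$ (and similarly inside the $L'/L$ identity), whereas you keep the $\Gamma$-factors explicit and match them via $\log|\Gamma(s_1)/\Gamma(s_2)| = \lambda\,\mathrm{Re}(\Gamma'/\Gamma)(s_1)+O(1)$; the arithmetic is the same.
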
 
  
\begin{proof}  Put $s_0= 1+\lambda+it$, and $s_1 = 1- \lambda+it$.  Applying \eqref{Hadamard} and \eqref{Hadamard2} 
 with $s=s_0$ and $s=s_1$, and invoking Stirling's formula, we see that 
  \begin{equation} 
  \label{2.3}  
  \Big| \frac{L(s_1,\chi)}{L(s_0,\chi)}\Big| \asymp (q(1+|t|))^{\lambda} \Big| \frac{\xi(s_1,\chi)}{\xi(s_0,\chi)}\Big| = (q(1+|t|))^{\lambda} 
  \prod_{\rho} \frac{|s_1-\rho|}{|s_0-\rho|}. 
  \end{equation} 
  Note that 
  \begin{align*}
   \Big| \frac{s_1-\rho}{s_0-\rho}\Big| &= \Big(1 - \frac{|s_0-\rho|^2-|s_1-\rho|^2}{|s_0-\rho|^2}\Big)^{\frac 12} 
  \le \exp\Big( -2\lambda \frac{\text{Re}(1-\rho)}{|s_0-\rho|^2} \Big) \nonumber\\
  &= \exp\Big( -2\lambda \text{Re }\Big(\frac{1}{s_0-\rho}\Big) 
 +\frac{2\lambda^2}{|s_0-\rho|^2}\Big) . 
  \end{align*} 
  Using this in \eqref{2.3}, we conclude that 
  \begin{equation} 
  \label{2.4} 
  \Big|\frac{L(s_1,\chi)}{L(s_0,\chi} \Big| 
  \ll (q(1+|t|))^{\lambda} \exp\Big(  2\lambda \sum_{\rho} \Big(-\text{Re }\Big(\frac{1}{s_0-\rho}\Big) + \frac{\lambda}{|s_0-\rho|^2}\Big)\Big). 
  \end{equation} 
  
On the other hand, taking logarithmic derivatives in \eqref{Hadamard}, we see that 
  \begin{equation} 
  \label{2.41}
 - \text{Re}  \frac{L^{\prime}}{L}(s_0,\chi) = \frac{1}{2} \log q(1+|t|) + O(1)  - \sum_{\rho} \text{Re }\Big(\frac{1}{s_0-\rho}\Big), 
\end{equation}
and the left hand side above is trivially bounded in magnitude by 
\begin{equation} 
\label{2.42} 
\le \sum_{n=1}^{\infty} \frac{\Lambda(n)}{n^{1+\lambda}} = \frac{1}{\lambda} +O(1).
\end{equation}
  Inserting this bound in \eqref{2.4}, we conclude that 
  $$ 
  \Big| \frac{L(s_1,\chi)}{L(s_0,\chi)}\Big| \ll \exp\Big( \sum_{\rho} \frac{2\lambda^2}{|s_0-\rho|^2}\Big),
  $$ 
  and since $|L(s_0,\chi)| \le \zeta(1+\lambda) = \frac{1}{\lambda}+O(1)$, the lemma follows.  
  \end{proof} 

Next, we show how character sums may be related to suitable averages of $L$-functions.  

\begin{lemma} \label{lem2.2}  Let  $\phi$ be a real number, $T$ a positive real number, and $0\le \lambda\le \frac 12$.  Let $\chi_{\phi}$ denote the completely multiplicative function $\chi_{\phi}(n) = \chi(n) n^{-i\phi}$, and let $S(x,\chi_{\phi}) = \sum_{n\le x} \chi_{\phi}(n)$.  Then 
$$ 
\sqrt{2\pi T} \int_{-\infty}^{\infty} \frac{S(e^{y},\chi_{\phi})}{e^y} \exp\Big({\lambda y - \frac{T}{2}y^2}  \Big) dy = \int_{-\infty}^{\infty} 
\frac{L(1-\lambda+i\phi+i\xi,\chi)}{1-\lambda+i\xi} \exp\Big(-\frac{\xi^2}{2T}\Big) d\xi.
$$ 
\end{lemma}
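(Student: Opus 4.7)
The identity is a direct application of the multiplication formula for the Fourier transform, once one recognizes that the right-hand side is the pairing of a Gaussian with the Fourier transform of a function built out of the character sum.

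My plan is as follows. First, absorb the twist by $n^{-i\phi}$ into the character: since $\chi_{\phi}(n)=\chi(n)n^{-i\phi}$, we have $L(s,\chi_{\phi})=L(s+i\phi,\chi)$, so on the right-hand side we may replace $L(1-\lambda+i\phi+i\xi,\chi)$ by $L(1-\lambda+i\xi,\chi_{\phi})$. Next, because $\chi$ is primitive and hence $\chi_{\phi}$ has bounded partial sums, Abel summation is valid for $\mathrm{Re}(s)=1-\lambda>0$ and gives
\[
\frac{L(1-\lambda+i\xi,\chi_{\phi})}{1-\lambda+i\xi}=\int_{1}^{\infty}S(x,\chi_{\phi})\,x^{-(1-\lambda+i\xi)-1}\,dx=\int_{0}^{\infty}\frac{S(e^{y},\chi_{\phi})}{e^{y}}e^{\lambda y}\,e^{-i\xi y}\,dy.
\]
Since $S(e^{y},\chi_{\phi})=0$ for $y<0$, the lower limit can be extended to $-\infty$. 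Writing
\[
f(y):=\frac{S(e^{y},\chi_{\phi})}{e^{y}}e^{\lambda y}\,\mathbf{1}_{y\ge 0},
\]
this is precisely $\widehat{f}(\xi)=\int_{-\infty}^{\infty}f(y)e^{-i\xi y}\,dy$; note $f\in L^{1}(\mathbb{R})$ because $\lambda<1$ and $S(e^{y},\chi_{\phi})$ is bounded.

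Second, I apply the multiplication formula $\int \widehat{f}(\xi)g(\xi)\,d\xi=\int f(y)\widehat{g}(y)\,dy$ with $g(\xi)=\exp(-\xi^{2}/(2T))$. The Fourier transform of this Gaussian is the standard computation
\[
\widehat{g}(y)=\int_{-\infty}^{\infty}e^{-\xi^{2}/(2T)}e^{-i\xi y}\,d\xi=\sqrt{2\pi T}\,\exp\!\Big(-\tfrac{T}{2}y^{2}\Big),
\]
done by completing the square. Substituting,
\[
\int_{-\infty}^{\infty}\frac{L(1-\lambda+i\phi+i\xi,\chi)}{1-\lambda+i\xi}e^{-\xi^{2}/(2T)}\,d\xi=\sqrt{2\pi T}\int_{-\infty}^{\infty}\frac{S(e^{y},\chi_{\phi})}{e^{y}}\exp\!\Big(\lambda y-\tfrac{T}{2}y^{2}\Big)\,dy,
\]
which is exactly the stated identity.

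There is no serious obstacle in this proof; the one thing requiring care is the justification of Fubini (equivalently, of the multiplication formula) at $\lambda=0$, when $f$ decays only like $1/e^{y}$. But $f\in L^{1}\cap L^{2}$ for every $\lambda\in[0,\tfrac{1}{2}]$, and the Gaussian $g$ lies in the Schwartz class, so the identity $\int \widehat{f}g=\int f\widehat{g}$ is standard and requires no further comment. Thus the lemma follows purely from Abel summation plus one Gaussian Fourier transform.
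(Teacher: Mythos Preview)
Your proof is correct and follows essentially the same route as the paper: compute the Fourier transform of $y\mapsto \tfrac{S(e^{y},\chi_\phi)}{e^{y}}e^{\lambda y}$ (the paper does it by expanding the sum and integrating termwise, you via Abel summation---these are the same calculation), compute the Gaussian Fourier transform, and apply the Plancherel/multiplication identity. One tiny quibble: for $\phi\neq 0$ the partial sums $S(x,\chi_\phi)$ are not literally bounded but only $O_{\phi,q}(\log x)$ (by partial summation from P\'olya--Vinogradov), which is still more than enough for $f\in L^{1}$ and for the boundary term in Abel summation to vanish.
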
 
\begin{proof}  The Fourier transform of $\frac{S(e^y,\chi_{\phi})}{e^y} \exp(y\lambda)$ is 
\begin{align*} 
\int_{-\infty}^{\infty}\frac{ S(e^y,\chi_{\phi})}{e^y}  \exp(y\lambda) e^{-iy\xi} dy
& =  \sum_{n=1}^{\infty} \frac{\chi(n)}{n^{i\phi}} \int_{\log n}^{\infty} \exp(-y(1-\lambda+i\xi)) dy\\ 
&=   \frac{L(1-\lambda+i\phi+i\xi,\chi)}{1-\lambda+i\xi}.
\end{align*}
The Fourier transform of $\exp(-\frac T2 y^2)$ is 
$$ 
\int_{-\infty}^{\infty} \exp\Big(-\frac T2 y^2-iy\xi\Big) dy  = \frac{\sqrt{2\pi}}{\sqrt{T}} \exp\Big(-\frac{\xi^2}{2T}\Big).  
$$  
The lemma follows by the Plancherel formula. 
\end{proof}  

Our last ingredient comes from the theory of mean-values of multiplicative functions.   

\begin{lemma}\label{lem2.3}  Let $y_0\ge 3$ be a real number, and we assume that $|S(e^{y_0},\chi)| \ge e^{y_0} y_0^{-\frac{1}{100}}$.  There exists a real number $\phi=\phi(y_0)$, with $|\phi| \ll e^{y_0}/|S(e^{y_0},\chi)|$, 
such that for any real number $y$,
\begin{equation} 
\label{lem231}
\Big| \frac{S(e^{y},\chi_{\phi})}{e^{y}}  - \frac{S(e^{y_0},\chi_{\phi})}{e^{y_0} } \Big| \ll \Big(\frac{1+|y-y_0|}{y_0}\Big)^{\frac 13} . 
\end{equation} 
Moreover 
\begin{equation} 
\label{lem232}
S(e^{y_0},\chi_{\phi}) = (1+i\phi)e^{-i\phi y_0} S(e^{y_0},\chi) + O\Big(\frac{e^{y_0}}{y_0^{\frac 34}}\Big). 
\end{equation}
Finally, if $\chi$ has small order $k$ then the following stronger bound for $\phi$ holds:  for some absolute constant $c>0$,    
\begin{equation} 
\label{lem233} 
|\phi| \le \frac{1}{y_0} \Big( \frac{ce^{y_0}}{|S(e^{y_0},\chi)|}\Big)^{2k^2}. 
\end{equation} 
\end{lemma}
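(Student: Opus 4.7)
The plan is to apply the three results \eqref{Hal1}, \eqref{Hal2}, \eqref{Hal3} of Section~\ref{multfns} at the scale $x=e^{y_0}$ with $f=\chi$. Write $\phi=\phi_\chi(x)$ for the optimal shift and $M=\mathbb{D}(\chi,n^{i\phi};x)^2$ for the associated distance. The first task is the bound on $\phi$: Hal\'asz's theorem \eqref{Hal1} gives $|S(x,\chi)|/x\ll(M+1)e^{-M}/(1+|\phi|)+y_0^{-(2-\sqrt 3)+o(1)}$. The hypothesis $|S(x,\chi)|/x\ge y_0^{-1/100}$ greatly exceeds the error term $y_0^{-(2-\sqrt 3)+o(1)}$, so the main term must dominate: $(M+1)e^{-M}/(1+|\phi|)\gg y_0^{-1/100}$. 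Since $(M+1)e^{-M}\le 1$, this yields $|\phi|\ll e^{y_0}/|S(x,\chi)|$, and simultaneously forces $M\le(\log y_0)/100+O(\log\log y_0)$, which is crucial below. The Lipschitz inequality \eqref{lem231} then follows by applying \eqref{Hal3} to $\chi_\phi$: for $y_0/2\le y\le 2y_0$ the estimate produces an exponent $1-2/\pi+o(1)>1/3$, while outside this range $((1+|y-y_0|)/y_0)^{1/3}$ is bounded below by a positive constant, and the inequality holds trivially.

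For \eqref{lem232} I would rearrange the identity \eqref{Hal2} (with $f=\chi$) to express $S(x,\chi_\phi)$ in terms of $S(x,\chi)$. The error involves the factor $\exp(\sum_{p\le x}|1-\chi_\phi(p)|/p)$, which is controlled by Cauchy--Schwarz:
\[
\sum_{p\le x}\frac{|1-\chi_\phi(p)|}{p}\le\Big(2M\sum_{p\le x}\frac{1}{p}\Big)^{1/2}\le\frac{\log y_0}{\sqrt{50}}+o(\log y_0),
\]
using the bound on $M$ just obtained. Exponentiating gives a factor $\ll y_0^{1/7}$; combining with $(1+|\phi|)\ll y_0^{1/100}$, the prefactor $(\log y_0)/y_0$, and multiplying by $x=e^{y_0}$, the total error is $\ll e^{y_0}/y_0^{3/4}$.

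For \eqref{lem233} when $\chi$ has order $k$, iterating the pretentious triangle inequality \eqref{TrIneq} yields $\mathbb{D}(\chi^k,n^{ik\phi};x)\le k\,\mathbb{D}(\chi,n^{i\phi};x)$. Since $\chi^k$ agrees with the principal character away from $q$, the contributions from primes dividing $q$ cancel cleanly between $\sum 1/p$ and $\sum\cos(k\phi\log p)/p$, giving
\[
\sum_{p\le x,\,p\nmid q}\frac{1-\cos(k\phi\log p)}{p}=\log y_0+\log|k\phi|+O(1)\le k^2 M,
\]
valid for $1/y_0\le|k\phi|\le 1$ (with trivial modifications outside this range). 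Thus $|k\phi|\ll e^{k^2 M}/y_0$; combining with $e^M\ll(M+1)e^{y_0}/|S(x,\chi)|$ from the first paragraph gives $|\phi|\ll(M+1)^{k^2}(e^{y_0}/|S(x,\chi)|)^{k^2}/(ky_0)$. Since $M+1\le\log y_0\le e^{y_0}/|S(x,\chi)|$ for $y_0$ sufficiently large (as $e^{y_0}/|S(x,\chi)|\ge y_0^{1/100}$), absorbing $(M+1)^{k^2}$ into a further factor $(e^{y_0}/|S(x,\chi)|)^{k^2}$ produces \eqref{lem233} with an absolute constant $c$.

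The main obstacle is achieving the sharp $y_0^{-3/4}$ saving in \eqref{lem232}: the Cauchy--Schwarz reduction of $\sum|1-\chi_\phi(p)|/p$ to $\mathbb{D}(\chi,n^{i\phi};x)^2$ combined with the Hal\'asz upper bound $M\le(\log y_0)/100$ just barely balance, so the constant $1/100$ in the hypothesis is essentially forced. A secondary subtlety in the proof of \eqref{lem233} is the transition between the regimes $|k\phi|\ll 1/y_0$ and $|k\phi|\gg 1$, where the Mertens-type asymptotic changes shape and the boundary cases require separate handling.
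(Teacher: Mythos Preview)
Your argument follows the paper's proof almost exactly: Hal\'asz \eqref{Hal1} for the size of $\phi$ and $M$, the Lipschitz bound \eqref{Hal3} for \eqref{lem231}, Cauchy--Schwarz plus \eqref{Hal2} for \eqref{lem232}, and the triangle inequality followed by a Mertens/PNT computation for \eqref{lem233}. The numerics in your treatment of \eqref{lem232} are fine and agree with the paper's $\frac{1}{7}\log y_0$ bound.

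There is, however, a genuine slip in your last paragraph on \eqref{lem233}. You write ``$M+1\le\log y_0\le e^{y_0}/|S(x,\chi)|$ \dots\ (as $e^{y_0}/|S(x,\chi)|\ge y_0^{1/100}$)''. The hypothesis is the \emph{opposite} inequality: $|S(e^{y_0},\chi)|\ge e^{y_0}y_0^{-1/100}$ means $N:=e^{y_0}/|S(e^{y_0},\chi)|\le y_0^{1/100}$, not $\ge$. So you cannot bound $M+1$ by $N$ via $\log y_0$; indeed $N$ may be $1$ while $\log y_0$ is large. The conclusion $M+1\ll N$ is nevertheless true, but it must be extracted directly from Hal\'asz: from $(M+1)e^{-M}\gg 1/N$ one gets $e^{M}\ll (M+1)N$, hence $M\le \log N+\log(M+1)+O(1)$, and a short iteration (or the crude bound $e^{M}/(M+1)\ge e^{M/2}$ for $M$ bounded away from $0$) gives $M\le 2\log N+O(1)$. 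Thus $e^{M}\ll (cN)^{2}$ and $e^{k^{2}M}\ll (cN)^{2k^{2}}$, which combined with $|\phi|\ll e^{k^{2}M}/y_0$ yields \eqref{lem233}. This is exactly the route the paper (implicitly) takes; once you replace the reversed inequality by this direct deduction from \eqref{Hal1}, your proof is complete and matches the paper's.
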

\begin{proof}   Take $x=e^{y_0}$ and $f=\chi$ in section \ref{multfns}. By \eqref{Hal1} and the hypothesis we have 
$|\phi| \ll e^{y_0}/|S(e^{y_0},\chi)|\leq y_0^{\frac{1}{100}}$ as desired, and moreover that 
$M\le \frac 1{100} \log  y_0 + \log\log y_0+ O(1)$.  


To prove \eqref{lem231} we may clearly suppose that $|y-y_0| \le y_0/2$, in which case 
\eqref{lem231} follows immediately from \eqref{Hal3}.

Next, by Cauchy-Schwarz (and using $|1-\chi_{\phi}(p)|^2 \le 2 (1-\text{Re}(\chi_\phi(p)))$) 
 \begin{align*}
\sum_{p\le e^{y_0}} \frac{|1-\chi_\phi(p)|}{p}& \le \Big(\sum_{p\le e^{y_0}} \frac 1p \Big)^{\frac 12} \Big( \sum_{p\le e^{y_0}}  \frac{|1-\chi_\phi(p)|^2}{p} \Big)^{\frac 12} \le (\log y_0 +O(1))^{\frac 12} (2M)^{\frac 12}\\ &\le \frac{\log y_0}{7} +O(1).
 \end{align*}
Using this in \eqref{Hal2}, we obtain \eqref{lem232}.  

Finally, suppose that $\chi$ has order $k$.  The triangle inequality \eqref{TrIneq} gives 
$$ 
M=\sum_{p\le e^{y_0}} \frac{1-\text{Re }\chi_\phi(p)}{p} \ge \frac{1}{k^2} \sum_{p\le e^{y_0}} \frac{1- \text{Re }(\chi(p)p^{-i\phi})^k}{p} 
\ge \frac{1}{k^2} \sum_{p\le e^{y_0}} \frac{1-\cos (k\phi \log p)}{p}. 
$$ 
Using the prime number theorem it follows that $\phi \ll \exp(k^2 M)/y_0$ which yields the final assertion \eqref{lem233} of the lemma.  
\end{proof} 

Combining Lemmas \ref{Hadlem}, \ref{lem2.2}, and \ref{lem2.3}, we arrive at the following 
proposition.  

\begin{proposition} 
\label{prop2.4}  Let $y_0$ be large with $|S(e^{y_0},\chi)| =: e^{y_0}/N \ge e^{y_0}y_0^{-\frac{1}{100}}$, and let 
$\phi$ be as in Lemma \ref{lem2.3}.   If $c N^6/y_0 \le \lambda \le \frac 12$ for a suitably large constant $c$, then there exists $|\xi| \le 2\lambda \sqrt{(\log q)/y_0}$ such that 
\begin{equation} 
\label{2.45} 
\sum_{\rho} \frac{\lambda}{|1+\lambda+i\phi +i\xi -\rho|^2} \ge \frac{y_0}{4}. 
\end{equation} 
\end{proposition}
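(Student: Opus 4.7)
The plan is to apply Lemma \ref{lem2.2} with the specific choice $T = \lambda/y_0$. Writing
$$
\exp\Big(\lambda y - \tfrac{T}{2}y^2\Big) = \exp(\lambda y_0/2)\exp\Big(-\tfrac{T}{2}(y-y_0)^2\Big),
$$
the weight on the left-hand side becomes a Gaussian centered at $y=y_0$ (the point where $|S(e^y,\chi)|/e^y = 1/N$ is large by hypothesis) with standard deviation $\sigma = \sqrt{y_0/\lambda} \leq y_0/(\sqrt{c}\,N^3)$. Over the bulk of this Gaussian, the Lipschitz estimate \eqref{lem231} shows that $S(e^y,\chi_\phi)/e^y$ stays within $O((|y-y_0|/y_0)^{1/3})$ of its value $c_0 := S(e^{y_0},\chi_\phi)/e^{y_0}$ at $y_0$.

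First I evaluate the left-hand side.  By \eqref{lem232} and $N\leq y_0^{1/100}$ we have $|c_0| \geq 1/N + O(y_0^{-3/4}) \gtrsim 1/N$. The computation $\int \exp(\lambda y - \tfrac{T}{2}y^2)\,dy = \sqrt{2\pi/T}\exp(\lambda y_0/2)$ yields a main term of size $2\pi c_0 \exp(\lambda y_0/2)$, while the integrated Lipschitz deviation contributes an error of order $(\lambda y_0)^{-1/6}\exp(\lambda y_0/2)$, which is $\ll (1/N)\exp(\lambda y_0/2)$ precisely because $\lambda y_0 \geq c N^6$ forces $(\lambda y_0)^{1/6} \gg N$; the contribution from $|y-y_0|>y_0/2$ is killed by the Gaussian tail $\exp(-\lambda y_0/8)$, which is negligible since $\lambda y_0 \gg \log N$. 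Altogether,
$$
\bigg|\sqrt{2\pi T}\int \frac{S(e^y,\chi_\phi)}{e^y}\exp\Big(\lambda y-\tfrac{T}{2}y^2\Big)\,dy\bigg| \gtrsim \frac{1}{N}\exp\Big(\frac{\lambda y_0}{2}\Big).
$$

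Next I argue by contradiction on the right-hand side. Assume that for every $|\xi| \leq \Xi := 2\lambda\sqrt{(\log q)/y_0}$ one has $\sum_\rho \lambda/|1+\lambda+i\phi+i\xi-\rho|^2 < y_0/4$. Lemma \ref{Hadlem} then gives $|L(1-\lambda+i\phi+i\xi,\chi)| \ll (1/\lambda)\exp(\lambda y_0/2)$ throughout $|\xi|\leq \Xi$; combined with $|1-\lambda+i\xi|^{-1}\leq 2$ and the total Gaussian mass $\int \exp(-\xi^2/(2T))\,d\xi = \sqrt{2\pi T}$, the ``near'' part of the right-hand side is bounded by $\ll (\lambda y_0)^{-1/2}\exp(\lambda y_0/2)$. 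For the tail $|\xi|>\Xi$, the choice of $\Xi$ gives $\Xi^2/(2T) = 2\lambda\log q$, so $\exp(-\xi^2/(2T)) \leq q^{-2\lambda}$; combined with a standard convexity bound of the form $|L(1-\lambda+it,\chi)| \ll (q(1+|t|))^{\lambda/2+o(1)}$ and the bound $|\phi| \ll N \leq y_0^{1/100}$ from Lemma \ref{lem2.3}, this tail is completely negligible compared to the main term.

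Comparing the two sides forces $1/N \ll (\lambda y_0)^{-1/2}$, i.e.\ $\lambda y_0 \ll N^2$, which contradicts $\lambda y_0 \geq c N^6$ for $c$ chosen sufficiently large. The principal obstacle is bookkeeping: the single hypothesis $\lambda \geq cN^6/y_0$ must simultaneously beat the $(\lambda y_0)^{-1/6}$ Lipschitz slippage on the left, match the $1/N$ coming from \eqref{lem232} against the $1/\sqrt{\lambda y_0}$ Gaussian spreading on the right, and make the convexity-bounded tail $|\xi|>\Xi$ truly negligible throughout the regime $\sqrt{\log q}\leq y_0 \leq \tfrac{1}{2}\log q$ permitted by Theorem \ref{main1}.
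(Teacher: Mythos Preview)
Your argument is correct and follows essentially the same strategy as the paper: set $T=\lambda/y_0$, use Lemma~\ref{lem2.3} to get the lower bound $\gg N^{-1}\exp(\lambda y_0/2)$ on the left of Lemma~\ref{lem2.2}, and control the right-hand side by splitting at $|\xi|=2\lambda\sqrt{(\log q)/y_0}$. The only cosmetic differences are that the paper extracts the maximizing $\xi$ directly (rather than arguing by contradiction) and handles the tail with the elementary partial-summation bound $|L(s,\chi)|\le |s|\int_1^\infty \min(x,q)x^{-\sigma-1}\,dx$ instead of convexity; both routes lead to the same conclusion.
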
 
\begin{proof} Put $T=\lambda/y_0$, and note that $T\le 1/(2y_0)<1$.  Using Lemma \ref{lem2.3} we find that 
\begin{align*}
\sqrt{2\pi T} &\int_{-\infty}^{\infty} \frac{S(e^y,\chi_\phi)}{e^y} \exp\Big(\lambda y-\frac{T}{2}y^2\Big) dy 
\\
&= \sqrt{2\pi T} \exp\Big(\frac{\lambda y_0}{2}\Big) \int_{-\infty}^{\infty}  \Big(\frac{S(e^{y_0},\chi_{\phi})}{e^{y_0} } + 
O\Big(\frac{1+|y-y_0|^{\frac 13}}{y_0^{\frac 13}}\Big) \Big) \exp\Big(-\frac T2(y-y_0)^2\Big) dy.
\end{align*} 
A little calculation, together with \eqref{lem232}, gives that this equals 
\begin{equation*} 
2\pi \exp\Big(\frac{\lambda y_0}{2} \Big) \Big( \frac{(1+i\phi)S(e^{y_0},\chi)}{e^{y_0(1+i\phi)}} + O\Big(\frac{1}{(\lambda y_0)^{\frac 16}}\Big)\Big), 
\end{equation*} 
which, by our assumed lower bound on $\lambda$, is in magnitude $\ge \pi \exp(\frac{\lambda y_0}2)/N$.

Thus, by Lemma \ref{lem2.2}, we see that 
\begin{align*}
\pi \exp\Big( \frac{\lambda y_0}{2} \Big)/N &\le \int_{-\infty}^{\infty} \frac{|L(1-\lambda+i\phi + i\xi,\chi)|}{|1-\lambda+i\xi|} \exp\Big(-\frac{\xi^2}{2T}\Big) d\xi \\
&\le \Big(\max_{\xi \in {\mathbb R}} \frac{|L(1-\lambda+i\phi +i\xi,\chi)|}{|1-\lambda+i\xi|} \exp\Big(-\frac{\xi^2}{4T}\Big) \Big) 
\int_{-\infty}^{\infty} \exp\Big(-\frac{\xi^2}{4T}\Big) d\xi, 
\end{align*} 
so that 
\begin{equation}
\label{2.8} 
\max_{\xi \in {\mathbb R}} \frac{|L(1-\lambda+i\phi +i\xi,\chi)|}{|1-\lambda+i\xi|} \exp\Big(-\frac{\xi^2}{4T}\Big) \ge 
\frac 12 \sqrt{\frac { \pi y_0} {\lambda}} \exp\Big( \frac{\lambda y_0}{2}\Big)/N. 
\end{equation} 

If Re$(s)=\sigma >0$ then note that 
$$ 
|L(s,\chi)| =\Big| s\int_1^{\infty} \frac{S(x,\chi)}{x^{s+1}}dx \Big| \le |s| \int_1^{\infty} \frac{\min(x,q)}{x^{\sigma+1}}dx = 
|s| \Big(\frac{q^{1-\sigma}-1}{1-\sigma} + \frac{q^{1-\sigma}}{\sigma} \Big). 
$$ 
Therefore, if $|\xi| > 2\lambda\sqrt{(\log q)/y_0}$ then 
$$ 
 \frac{|L(1-\lambda+i\phi +i\xi,\chi)|}{|1-\lambda+i\xi|} \exp\Big(-\frac{\xi^2}{4T}\Big) \le \Big|\frac{1-\lambda+i\phi+i\xi}{1-\lambda+i\xi}\Big| 
 \Big(\frac{q^{\lambda}-1}{\lambda}+\frac{q^{\lambda}}{1-\lambda}\Big)q^{-\lambda} \le \frac{2(1+2|\phi|)}{\lambda}.
 $$ 
 Since $|\phi| \ll N$, and $\lambda y_0 \ge cN^6$ for a suitably large constant $c$, we may check that the RHS above is 
 smaller than the RHS in \eqref{2.8}.  Therefore the maximum in the LHS of \eqref{2.8} is attained for some $\xi$ with $|\xi| \le 2\lambda\sqrt{(\log q)/y_0}$,  and at this point we have, by \eqref{2.8},
 \[
  |L(1-\lambda+i\phi +i\xi,\chi)| \gg  
 (\lambda y_0)^{1/3} \exp\Big( \frac{\lambda y_0}{2}\Big) .
\]
Using now the bound from Lemma \ref{Hadlem}, we conclude that 
$$ 
\sum_{\rho} \frac{\lambda}{|1+\lambda+i\phi +i\xi -\rho|^2} \ge \frac{y_0}{4}.
$$
\end{proof} 

\section{Proofs of Theorems \ref{main1} and \ref{main2} and the Corollaries}  

\begin{proof}[Proof of Theorem \ref{main1}]  We appeal to Proposition \ref{prop2.4}, taking there $y_0= \log x$, and 
let $\phi$, $\lambda$ and $\xi$ be as given there.   We then have the lower bound furnished by \eqref{2.45}.   Split the zeros $\rho$ into those with $|1+i\phi-\rho| \ge 40 \lambda (\log q)/\log x$ and those zeros lying closer to $1+i\phi$.  Note that if $|1+i\phi-\rho| \ge 40\lambda (\log q)/\log x$ then, using the triangle inequality,  
$$ 
|1+\lambda +i\phi +i\xi -\rho| \ge \Big|1+ 20 \lambda\frac{\log q}{\log x} +i\phi -\rho \Big| - 20\lambda \frac{\log q}{\log x} - |\xi| \ge \frac {9}{20}\Big|  1+ 20 \lambda\frac{\log q}{\log x} +i\phi -\rho \Big|. 
$$ 
Therefore the contribution of these zeros to the LHS of \eqref{2.45} is 
$$ 
\le 5 \lambda \sum_{\rho} \frac{1}{|1+20\lambda (\log q)/\log x +i\phi -\rho|^2} \le 
\frac{\log x}{4 \log q} \sum_{\rho} \text{Re }\Big(\frac{1}{1+20\lambda (\log q)/\log x +i\phi -\rho}\Big),
$$ 
as Re$(\rho)\leq 1$ for all such $\rho$.
But, arguing as in \eqref{2.41} and \eqref{2.42}, we see that the above is at most 
$$ 
\frac{\log x}{4 \log q} \Big( \frac{5}{9} \log q \Big) = \frac{5\log x}{36}. 
$$ 
We conclude that the contribution of the zeros with $|1+i\phi - \rho| \le 40 \lambda (\log q)/\log x$ to 
the LHS of \eqref{2.45} is at least $(\log x)/9$.   Since each such zero contributes at most $1/\lambda$, it 
follows that 
$$ 
\Big|\Big \{\rho: |1+i\phi-\rho| \le 40 \lambda\frac{\log q}{\log x} \Big\}\Big | \ge \lambda \frac{\log x}{9}.
$$
The theorem follows upon setting $L= 40\lambda \log x$.
\end{proof} 

\begin{proof}[Proof of Theorem \ref{main2}]  We follow the argument above, now making use of the bound \eqref{lem233} which gives $|\phi| \le (cN)^{2k^2}/\log x$.  Therefore if now $\lambda \ge (cN)^{2k^2}/\log x$ ($\geq |\phi|$)
and $|1-\rho| \ge 40 \lambda (\log q)/\log x$ then $|1+i\phi -\rho| \ge 39 \lambda (\log q)/\log x$ and the argument above shows that the contribution of these zeros to the LHS of \eqref{2.45} is bounded by $0.15 \log x$.  
Thus we conclude Theorem \ref{main2}.
\end{proof} 

Corollary \ref{cor2} follows upon taking $L=c\epsilon^{-6}$ in Theorem \ref{main1} and $L=(c/\epsilon)^{2k^2}$ 
in Theorem \ref{main2}.
Corollaries \ref{cor4} and \ref{cor3} follow  upon taking $L=(\epsilon^2\log q)/4$ in Theorems \ref{main2} and \ref{main1}, respectively.

\section{Locating zeros: Proof of Theorem \ref{thm1.6}} 

Choosing $L= (c/\eta)^{2k^2}$ for a suitably large constant $c$, we find by Theorem \ref{main2} that 
for each $\chi_j \pmod{q_j}$ there is a zero of $L(s,\chi_j)$ satisfying $s= 1+ w_j/\log q_j$ with $|w_j| \le C_1(\eta)$ 
for a suitable constant $C_1(\eta)$.  Since the region $|w|\le C_1(\eta)$ is compact, we can extract from the 
sequence $w_j$ a convergent subsequence.  Now take $z_1$ to be the limiting value of $w_j$ from this 
convergent subsequence.  

By restricting to the subsequence above, let us suppose that we now have a sequence of characters $\chi_j$ of order $k$ with $L(s,\chi_j)$ having a zero satisfying $1+(z_1+o(1))/\log q_j$.    Now from the argument of \eqref{2.41} and \eqref{2.42} we may see that for any $L$-function there are at most a bounded number of 
zeros of the form $1+w/\log q$ with $|w| \le 1+|z_1|$.  Therefore, by appealing to Theorem \ref{main2} 
with a suitably large value of $L$, we may conclude that $L(s,\chi_j)$ has a zero of the form $s=1+w_j/\log q_j$ with $|z_1|+1 \le |w_j|  \le C_2(\eta)$ for some suitably large $C_2(\eta)$.  Since this region is again compact, we can once again extract a subsequence of characters for which $w_j$ converges, and we call one such limiting  value $z_2$.  

Proceeding in this manner, we obtain Theorem \ref{thm1.6}.

\section{Relations among characters with large partial sums} 

\noindent We begin by showing that if a multiplicative function $f$ is at a small 
distance from the function $n^{i\phi}$ then the partial sums of $f$ get large in 
suitable ranges.  

\begin{proposition}\label{prop6.1} Let $f$ be a multiplicative
function with $|f(n)|\leq 1$ for all $n$.    Let $x$ be large, and put 
$\lambda=M+\log(1+|\phi|)+c$ where $M=M_f(x)$ and $\phi=\phi_f(x)$ are as in Section 2, 
and $c$ is a large constant. Then  there exists $y$ in the
range $x^{1/(\lambda e^\lambda)}\leq y\leq x$ such that
$$
\Big|\sum_{n\leq y} f(n)\Big| \gg \frac{e^{-M} }{|1+i\phi|}\ y.
$$
\end{proposition}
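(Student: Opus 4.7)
My plan is to argue by contradiction. Setting $g(n) := f(n) n^{-i\phi}$ so that $\mathbb D(g,1;x)^2 = M$, the shifted series $F_\phi(s) := \sum_n g(n) n^{-s} = F(s+i\phi)$ satisfies $|F_\phi(1 + 1/\log x)| \asymp (\log x) e^{-M}$ by \eqref{Ftodist}. Suppose for contradiction that $|S_f(y)| < \eta\, y\, e^{-M}/|1+i\phi|$ for a small constant $\eta>0$ and every $y \in [y_1, x]$, where $y_1 := x^{1/(\lambda e^\lambda)}$. Via identity \eqref{Hal2} this hypothesis would translate into $|S_g(y)| \ll \eta\, y\, e^{-M}$ throughout the same range, provided the error term in \eqref{Hal2} is absorbed by the choice of $\lambda$; here the triangle inequality $|1-f(p)| \le |1-g(p)| + |1-p^{i\phi}|$ combined with Cauchy--Schwarz and Mertens yields
\begin{equation*}
\sum_{p \le y} \frac{|1-f(p)|}{p} \ll \sqrt{M\log\log y} + \log(1+|\phi|) + \log\log y,
\end{equation*}
which is what forces the inclusion of $\log(1+|\phi|)$ in the definition of $\lambda$.

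I would then compare this lower bound for $|F_\phi(1+1/\log x)|$ against the upper bound produced by partial summation. After the substitution $u = x^v$,
\begin{equation*}
F_\phi\bigl(1+\tfrac{1}{\log x}\bigr) = \Bigl(1+\tfrac{1}{\log x}\Bigr)(\log x) \int_0^\infty T(v)\, e^{-v}\, dv, \qquad T(v) := S_g(x^v)/x^v,
\end{equation*}
and I would split the integral into $[0, v_1]$, $[v_1, 1]$, and $(1, \infty)$ with $v_1 := 1/(\lambda e^\lambda)$. The trivial bound $|T(v)| \le 1$ on $[0, v_1]$ contributes $O(v_1) = O(1/(\lambda e^\lambda))$, which is $\ll e^{-M}$ by the choice $\lambda \ge M + c$; and the hypothesis on $[v_1, 1]$ contributes $O(\eta\, e^{-M})$.

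The hard part will be the tail $v > 1$, where the trivial bound alone gives $e^{-1}$, exceeding the target $e^{-M}$ once $M$ is appreciable. My strategy here is to invoke Hal\'asz's inequality \eqref{Hal1} applied to $g$ at scale $u = x^v$: since $\mathbb D(g, n^{it}; u)^2$ is monotone increasing in $u$ for each fixed $t$, one has $M_g(u) \ge M_g(x) = M$ for every $u \ge x$, so $(M_g(u)+1) e^{-M_g(u)} \le (M+1) e^{-M}$ by monotonicity of $(M+1)e^{-M}$; tracking the optimal phase $\phi_g(u)$ via a continuity/compactness argument to check that it remains of moderate size on a bounded $v$-window, and truncating beyond that window using the trivial bound, I would bound the tail integral by a small constant times $e^{-M}$. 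Adding the three contributions then contradicts $|F_\phi(1+1/\log x)| \gg (\log x) e^{-M}$, completing the proof. The main technical obstacle is uniformly controlling $\phi_g(u)$ on the intermediate tail and absorbing the $(M+1)$ loss in Hal\'asz's bound; together with the error term from \eqref{Hal2}, this is precisely what dictates the calibration $\lambda = M + \log(1+|\phi|) + c$ for a sufficiently large absolute $c$.
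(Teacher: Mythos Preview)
Your plan founders on the tail $v>1$, and the root cause is evaluating the Dirichlet series at $1+1/\log x$ rather than at $1+\lambda/\log x$. With your choice of abscissa you must show $\int_1^\infty |T(v)|e^{-v}\,dv$ is bounded by a \emph{small absolute constant} times $e^{-M}$ in order to contradict $|F_\phi(1+1/\log x)|\asymp (\log x)e^{-M}$. But Hal\'asz gives only $|T(v)|\ll (M_g(x^v)+1)e^{-M_g(x^v)}$, and your monotonicity yields at best $|T(v)|\ll (M+1)e^{-M}$; moreover the ``bounded $v$-window'' on which you invoke this must have length at least $M$, since beyond it the trivial bound contributes $e^{-V}$ and you need $e^{-V}\ll e^{-M}$. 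The upshot is a tail of order $(M+1)e^{-M}$, which for large $M$ swamps the target and no contradiction ensues. The parameter $\lambda$ never enters your tail estimate, so it cannot ``absorb the $(M+1)$ loss'' as you hope. (There are also side issues: the monotonicity $M_g(u)\ge M$ needs care since the minimization range $|t|\le\log u$ grows with $u$; and the error in \eqref{Hal2} is delicate near the bottom of the range where $\log y=(\log x)/(\lambda e^\lambda)$ may be small.)

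The paper's proof avoids all of this with one move: it takes $\delta=\lambda/\log x$ and works directly with $f$, writing
\[
F(1+\delta+i\phi)=(1+\delta+i\phi)\int_1^\infty S_f(y)\,y^{-2-\delta-i\phi}\,dy.
\]
The factor $|1+i\phi|$ then appears automatically from $|1+\delta+i\phi|$, so \eqref{Hal2} is not needed at all. Crucially, with this $\delta$ the \emph{trivial} bound $|S_f(y)|\le y$ already makes the tail $\int_x^\infty y^{-1-\delta}\,dy=e^{-\lambda}(\log x)/\lambda$, and the short initial range $[1,x^\eta]$ likewise contributes $\asymp e^{-\lambda}(\log x)/\lambda$ since $\eta\lambda=e^{-\lambda}$. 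Under the contradiction hypothesis $|S_f(y)|\le e^{-\lambda}y$ on $[x^{1/(\lambda e^\lambda)},x]$ the whole integral is $O\big((1+|\phi|)e^{-\lambda}(\log x)/\lambda\big)=O\big(e^{-c}e^{-M}(\log x)/\lambda\big)$, contradicting $|F(1+\delta+i\phi)|\gg e^{-M}(\log x)/\lambda$ (from \eqref{Ftodist} and $\mathbb D(f,n^{i\phi};x^{1/\lambda})^2\le M$) once $c$ is large. Shifting the abscissa to $1+\lambda/\log x$ is precisely what lets $\lambda$ do the work you wanted it to do.
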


\begin{proof}  By \eqref{Ftodist} we know that 
$$ 
\Big|F\Big(1+ \frac{\lambda}{\log x} + i\phi \Big)\Big| \asymp \frac{\log x}{\lambda} \exp(-{\mathbb D}(f,n^{i\phi};x^{1/\lambda})^2) 
\gg \frac{\log x}{\lambda} e^{-M}. 
$$ 
On the other hand, with $\delta=\lambda/\log x$, 
$$
\sum_{n=1}^{\infty} \frac{f(n)}{n^{1+\delta+i\phi}} = (1+\delta+i\phi)
\int_1^\infty \frac{1}{y^{2+\delta+i\phi}} \sum_{n\leq y} f(n) dy.
$$
Put $\eta = 1/(\lambda e^{\lambda})$.  Assuming that $|\sum_{n\leq y} f(n)|\leq e^{-\lambda} y$ for all
$x^{\eta} \leq y\leq x$, and using the  trivial bound $|\sum_{n\leq
y} f(n)|\leq y$ otherwise, we find that the right hand side above is bounded in size by 
\begin{align*} 
 &\leq |1+\delta + i\phi| \Big( \int_1^{x^{\eta}} \frac{dy}{y^{1+\delta}} + e^{-\lambda}
\int_{x^\eta }^x \frac{dy}{y^{1+\delta}} +\int_x^\infty
\frac{dy}{y^{1+\delta}} \Big)  \\
&= |1+\delta+i\phi| \frac{\log x}\lambda \left( (1 -e^{-\eta \lambda})  + e^{-\lambda}(e^{-\eta
\lambda} - e^{-\lambda}) + e^{-\lambda}  \right)\\
& \leq 6 (1+|\phi|)e^{-\lambda} \ \frac{\log
x}\lambda .
\end{align*}
This yields a contradiction, provided $c$ is sufficiently large.


\end{proof}

Our next result shows that if the partial sums of two completely multiplicative functions get large, then the 
product of these functions also has large partial sums.  Corollary \ref{MultChars} follows immediately from 
this result.   

\begin{theorem} \label{multis}  Let $f_1$ and $f_2$ be completely multiplicative functions with $|f_1(n)|$ and $|f_2(n)|$ bounded 
by $1$ for all $n$.  Suppose that $\eta$ is a positive real number and $x_1$, and $x_2$ are such such that (for $j=1$, $2$) 
\begin{equation} \label{fjlower}
\Big|   \sum_{n\leq x_j} f_j(n)\Big| \geq \eta x_j   . 
\end{equation}
Then, with $\xi = c \eta^6$ for a suitable absolute constant $c>0$, there exists $x\geq  (\min (x_1,x_2) )^{\xi}$ such that, for some absolute constant $c>0$, 
$$
\Big|  \sum_{n\leq x} f_1(n) f_2(n) \Big| \geq \xi x.  
$$
\end{theorem}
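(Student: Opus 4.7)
My strategy is to use Hal\'asz's inequality to locate, for each $j$, a twist $n^{i\phi_j}$ to which $f_j$ is pretentiously close; use a Minkowski-type product rule for the pretentious distance to control $\mathbb{D}(f_1 f_2, n^{i(\phi_1+\phi_2)};x)$; and then invoke the argument of Proposition \ref{prop6.1} on the product $f_1 f_2$ with this specific (not necessarily optimal) choice of twist.

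First, applying Hal\'asz's estimate \eqref{Hal1} at $x=x_j$ for $j=1,2$ to the hypothesis $|\sum_{n\le x_j} f_j(n)| \ge \eta x_j$, and absorbing the $(\log x_j)^{-(2-\sqrt{3}+o(1))}$ term under the assumption that $\min(x_1,x_2)$ is large (otherwise the conclusion is trivial for a suitable constant $c$), one gets
\[
\eta \ll \frac{(M_j+1)e^{-M_j}}{1+|\phi_j|},
\]
where $M_j = M_{f_j}(x_j)$ and $\phi_j = \phi_{f_j}(x_j)$ are as in Section \ref{multfns}. This yields $M_j \le \log(1/\eta)+O(\log\log(1/\eta))$ and $1+|\phi_j| \ll \log(1/\eta)/\eta$. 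Next, the elementary inequality $1-\text{Re}(ab) \le 2((1-\text{Re}(a))+(1-\text{Re}(b)))$ for $|a|,|b|\le 1$ (which follows from $|\text{Im}(a)|^2 \le 2(1-\text{Re}(a))$ together with $2|xy|\le x^2+y^2$), applied at each prime and summed, gives for $\Phi := \phi_1+\phi_2$ and $x:=\min(x_1,x_2)$ that
\[
M := \mathbb{D}(f_1 f_2, n^{i\Phi};x)^2 \le 2(M_1+M_2) \le 4\log(1/\eta)+o(\log(1/\eta)).
\]

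Now I apply the argument of Proposition \ref{prop6.1} to the completely multiplicative function $f_1 f_2$, taking $\phi = \Phi$ and $M$ as above. The proof there proceeds verbatim for any $\phi$, not just the optimal $\phi_{f_1 f_2}$, since its only use of \eqref{Ftodist} is through the lower bound $|F(1+\lambda/\log x + i\phi)| \gg (\log x/\lambda)e^{-M}$, valid whenever $M=\mathbb{D}(f,n^{i\phi};x^{1/\lambda})^2$ (or any upper bound thereof). With $\lambda = M+\log(1+|\Phi|)+c \le 5\log(1/\eta)+O(\log\log(1/\eta))$, one obtains some $y$ in the range $x^{1/(\lambda e^\lambda)} \le y \le x$ for which
\[
\Bigl|\sum_{n\le y} f_1(n) f_2(n)\Bigr| \gg \frac{e^{-M}}{1+|\Phi|}\, y.
\]

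Inserting the bounds computed above shows that $1/(\lambda e^\lambda) \ge \eta^5/(\log(1/\eta))^{O(1)}$ and $e^{-M}/(1+|\Phi|)\ge \eta^5/\log(1/\eta)$, both of which exceed $c\eta^6$ for a suitable constant $c$ once $\eta$ is small enough. The main delicate point is arranging the exponents to land at the asserted $\eta^6$: the Minkowski constant of $2$ in the product rule, combined with the factor $1+|\Phi|\ll (1/\eta)\log(1/\eta)$, costs exactly one power of $\eta$ in the final exponent compared to the $\eta^5$ one sees naively from $M\le 4\log(1/\eta)$, and this loss is swallowed by the logarithmic factors once $\eta$ is small; tightening either the Minkowski step or the Hal\'asz extraction would improve the exponent $6$.
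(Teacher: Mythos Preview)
Your argument is correct and is essentially the paper's own proof. The only cosmetic difference is that where you invoke the pointwise product bound $1-\mathrm{Re}(ab)\le 2\big((1-\mathrm{Re}\,a)+(1-\mathrm{Re}\,b)\big)$, the paper instead cites the triangle inequality \eqref{TrIneq} in the form $\mathbb{D}(f_1f_2,n^{i(\phi_1+\phi_2)};X)\le \mathbb{D}(f_1,n^{i\phi_1};X)+\mathbb{D}(f_2,n^{i\phi_2};X)$; both yield the same bound $M\le 4\log(1/\eta)+O(\log\log(1/\eta))$, and the paper likewise feeds this (with the same non-optimal twist $\phi_1+\phi_2$) into Proposition~\ref{prop6.1}. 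Your observation that Proposition~\ref{prop6.1} works for any fixed $\phi$, not just the extremal one, is exactly the point the paper uses implicitly; also note that Hal\'asz actually gives the slightly sharper $|\phi_j|\ll 1/\eta$ (since $(M+1)e^{-M}\le 1$), though your weaker bound suffices.
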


\begin{proof} 
 By \eqref{Hal1} there exists $\phi_1, \phi_2$ with $|\phi_j|\ll
1/\eta$   such that
$$
\mathbb D(f_j,n^{i\phi_j};x_j)^2    \leq \log (1/\eta) + \log\log (1/\eta) +O(1).
$$
Let $X=\min\{ x_1,x_2\},\  f =f_1f_2$ and $\phi=\phi_1+\phi_2$. Since 
$\mathbb D(f_j,n^{i\phi_j};X)\leq \mathbb D(f_j,n^{i\phi_j};x_j)$,  the triangle
inequality \eqref{TrIneq} gives
$$
\mathbb D(f ,n^{i\phi};X) \leq    \mathbb D(f_1,n^{i\phi_1};X) + \mathbb D(f_2,n^{i\phi_2};X),
$$
so that $\mathbb D(f,n^{i\phi};X)^2\leq 4\log (1/\eta) + 4\log\log
(1/\eta) +O(1)$. The result now follows from Proposition \ref{prop6.1}.
\end{proof}

Another variant of the argument of Theorem \ref{multis} is the following.  Suppose $f$ is completely multiplicative 
with $|f(n)|\le 1$ and $|\sum_{n\le x} f(n) |\ge \eta x$.  Then for any natural number $k$, there exists 
$y\ge x^{c\eta^{2k^2}}$ (for a suitable absolute constant $c>0$) with 
$$ 
\Big| \sum_{n\le y} f(n)^k \Big| \ge c \eta^{2k^2} y. 
$$ 
To see this, note that our hypothesis on $f$ implies (as in Theorem \ref{multis})  that 
$$
{\mathbb D}(f,n^{i\phi};x)^2 \le \log (1/\eta) + \log \log (1/\eta) + O(1)
$$ 
for some $|\phi| \ll 1/\eta$.  By the triangle inequality it follows that  
${\mathbb D}(f^k,n^{ik\phi};x) \le k {\mathbb D}(f,n^{i\phi};x)$.  Now we invoke Proposition \ref{prop6.1}, and 
obtain the stated conclusion.  One application of this variant is that (stated informally) if a small power of a 
character $\chi$ equals a non-principal character of small conductor, then one can obtain cancelations in the 
character sums for $\chi$.



\section{Producing many quadratic residues below $p^{\frac 14}$} 

\noindent Corollary \ref{quadres} follows immediately from the Burgess bound together with the following general result on 
completely multiplicative functions taking values in $[-1,1]$, which largely follows from the 
work in \cite{GSSpectrum}.  

 \begin{proposition}  Let $x$ be large, and let $f$ be a completely multiplicative function with
$-1\leq f(n)\leq 1$ for all $n$.  Suppose that 
$$ 
\sum_{n\le x} f(n) = o(x). 
$$ 
Then for $1/\sqrt{e}\le \alpha \le 1$ we have 
$$
\frac{1}{x^\alpha} \Big| \sum_{n\le x^\alpha} f(n) \Big|  \leq \max\Big( |\delta_1|, \frac 12+2(\log \alpha)^2\Big) +o(1), 
$$
where 
$$
\delta_1= 1-2 \log (1+\sqrt{e}) + 4\int_1^{\sqrt{e}} \frac{\log t}{t+1} dt = - 0.656999\ldots. 
$$ 
\end{proposition}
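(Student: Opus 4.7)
The plan is to reduce the claim to the extremal analysis of completely multiplicative functions carried out in \cite{GSSpectrum}. By a diagonal extraction, replace $x$ by a subsequence along which the normalized partial sums $x^{-\beta}\sum_{n \le x^{\beta}} f(n)$ converge to a limit $s(\beta)$ for every rational $\beta \in [0,1]$; by the Lipschitz estimate \eqref{Hal3}, $s$ extends to a continuous function on $(0,1]$. The hypothesis $\sum_{n\le x} f(n) = o(x)$ becomes $s(1) = 0$, and the goal is the pointwise bound $|s(\alpha)| \le \max(|\delta_1|, \tfrac 12 + 2(\log\alpha)^2)$ for $\alpha \in [1/\sqrt{e}, 1]$. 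From the Selberg-type identity $\sum_{n\le y} f(n) \log n = \sum_{d \le y} \Lambda(d) f(d) \sum_{m \le y/d} f(m)$, valid because $f$ is completely multiplicative, I would take $y = x^{\beta}$, divide by $y \log y$, smooth the sum over prime powers via the prime number theorem, and pass to the limit to obtain the convolution equation
\begin{equation*}
\beta s(\beta) = \int_0^{\beta} g(t)\, s(\beta - t)\, dt,
\end{equation*}
for some measurable $g : [0,1] \to [-1,1]$ recording the limiting mean of $f(p)$ for primes $p \approx x^t$.

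Next I would invoke the variational analysis of \cite{GSSpectrum}: among all pairs $(g,s)$ with $g$ measurable into $[-1,1]$ and $s$ real-valued satisfying the above integral equation with $s(1)=0$, the quantity $|s(\alpha)|$ is extremized by a Haselgrove-type profile in which $g$ is piecewise constant with a single sign change. Solving the integral equation explicitly for such a $g$ (either by iterated substitution, or by passing through Laplace transforms) forces the transition point to lie at $1 - 1/\sqrt{e}$ in order that $s(1) = 0$, and the resulting $s$ at the left endpoint equals exactly $\delta_1 = 1 - 2\log(1+\sqrt{e}) + 4\int_1^{\sqrt{e}} \frac{\log t}{t+1}\,dt$. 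For $\alpha$ in the intermediate range where $|s(\alpha)|$ from the extremal $g$ could in principle exceed $|\delta_1|$, one instead bounds the excursion directly via \eqref{Hal3}, combined with an elementary piecewise analysis of the integral equation around $\beta = 1$, to obtain the quadratic-in-$\log\alpha$ bound $\tfrac 12 + 2(\log\alpha)^2$; taking the $\max$ of the two bounds then covers the full range $\alpha \in [1/\sqrt{e}, 1]$.

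The main obstacle is the extremal step: one must rigorously identify the Haselgrove-type $g$ as the worst case and then match the two regimes at the crossover (numerically near $\alpha \approx 0.755$, where $\tfrac 12 + 2(\log\alpha)^2 = |\delta_1|$). This is essentially packaged in the results of \cite{GSSpectrum}, and the present proof amounts to verifying that the setup above fits their framework and tracking the $\alpha$ dependence through their extremal computation. A secondary, more routine difficulty is obtaining uniformity in $\alpha$ of the error terms when passing from the Selberg identity to the limiting integral equation; these are absorbed into the $o(1)$ by repeated use of the Lipschitz estimate and the prime number theorem with classical error.
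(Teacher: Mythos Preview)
Your approach and the paper's both ultimately feed into the machinery of \cite{GSSpectrum}, but the paper takes a much more direct route. Rather than passing to a limiting integral equation via diagonal extraction, the paper first smooths $f$ to a function $g$ with $g(p)=1$ for $p\le y=\exp((\log x)^{2/3})$, so that the associated sieve quantity $\tau(\alpha)=\sum_{p\le x^{\alpha}}(1-g(p))/p$ is well behaved. The hypothesis $\sum_{n\le x} f(n)=o(x)$ then forces (via one-term inclusion--exclusion, Proposition~3.6 of \cite{GSSpectrum}) $\tau(1)\ge 1+o(1)$, hence $\tau(\alpha)\ge 1+2\log\alpha+o(1)$. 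The argument now splits cleanly: if $\tau(\alpha)\ge 1$, Theorem~5.1 of \cite{GSSpectrum} gives the bound $|\delta_1|$ directly; if $1+2\log\alpha\le \tau(\alpha)\le 1$, two-term inclusion--exclusion gives the bound $1-\tau(\alpha)+\tau(\alpha)^2/2$, which is maximized at $\tau(\alpha)=1+2\log\alpha$ and equals exactly $\tfrac12+2(\log\alpha)^2$. No extremal identification of a Haselgrove profile, and no Lipschitz input, is needed.

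This points to a genuine gap in your sketch: the bound $\tfrac12+2(\log\alpha)^2$ does \emph{not} come from the Lipschitz estimate \eqref{Hal3}. That estimate would give $|s(\alpha)|=|s(\alpha)-s(1)|\ll (1-\alpha)^{1-2/\pi+o(1)}$, which is qualitatively different (it tends to $0$ as $\alpha\to 1$, whereas $\tfrac12+2(\log\alpha)^2$ tends to $\tfrac12$) and does not produce the stated quadratic expression. The correct source of that term is the two-step truncation of the inclusion--exclusion expansion of $\sum_{n\le x^{\alpha}} g(n)$ in terms of $\tau(\alpha)$, combined with the lower bound $\tau(\alpha)\ge 1+2\log\alpha$ forced by $s(1)=0$. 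Your integral-equation framework can certainly recover this (the inclusion--exclusion series is precisely the Neumann expansion of the Volterra equation you wrote down), but you should replace the appeal to \eqref{Hal3} by that truncation argument. Likewise, the $|\delta_1|$ bound in the paper is obtained by quoting Theorem~5.1 of \cite{GSSpectrum} (which is stated in terms of $\tau\ge 1$), not by re-running the variational analysis, so your ``main obstacle'' is in fact already packaged and you need only verify that $\tau(\alpha)\ge 1$ is the relevant hypothesis.
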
 

 \begin{proof} We make free use of the work in \cite{GSSpectrum}.  Put $y=\exp((\log x)^{\frac 23})$ and 
 let $g$ be the completely multiplicative function defined by $g(p)=1$ for $p\le y$ and $g(p)=f(p)$ for $p>y$.  
 Then (see page 439 of \cite{GSSpectrum}) for $x^{1/\sqrt{e}} \le z \le x$ we have 
 $$ 
 \sum_{n\le z} f(n) = \Big( \prod_{p\le y} \Big(1-\frac 1p\Big) \Big(1-\frac{f(p)}{p}\Big)^{-1} \Big) \sum_{n\le z} g(n) +o(z).
 $$ 
 If now 
 $$ 
 \prod_{p\le y} \Big(1-\frac 1p\Big) \Big(1-\frac{f(p)}{p}\Big)^{-1} \le \frac 1{10},
 $$ 
 then the result follows at once.  So let us assume that the product above is at least $\frac 1{10}$, so that 
 we have 
 $$ 
 \sum_{n\le x} g(n) = o(x).
 $$ 
 
 Now we may pass from mean values of multiplicative functions to solutions of integral equations as in 
 \cite{GSSpectrum}, and use the results established there.  Put 
 $$ 
 \tau(\alpha) = \sum_{p\le x^{\alpha}} \frac{1-g(p)}{p}. 
 $$ 
 By inclusion-exclusion (see Proposition 3.6 of \cite{GSSpectrum}) we have 
 $$ 
o(x)= \sum_{n\le x} g(n) \ge x(1 -\tau(1) +o(1)), 
$$ 
so that $\tau(1) \ge 1+o(1)$ and more generally 
$$ 
\tau(\alpha) \ge 1 -2 \sum_{x^{\alpha} \le p\le x} \frac{1}{p} = 1 +2\log \alpha + o(1).
$$ 
Applying Theorem 5.1 of \cite{GSSpectrum}, if $\tau(\alpha)  \geq 1$ then
$$
\Big|\sum_{n\le x^{\alpha}} g(n) \Big| \leq (|\delta_1| +o(1)) x^{\alpha}.
$$
If $1+2\log \alpha \le \tau(\alpha)\le 1$, then an inclusion-exclusion argument (see again Proposition 3.6 of \cite{GSSpectrum}) 
gives 
$$ 
\Big| \sum_{n\le x^{\alpha}} g(n) \Big| \le \Big(1-\tau(\alpha) + \frac{\tau(\alpha)^2}{2} +o(1)\Big) x^{\alpha} 
\le \Big( \frac 12+ 2 (\log \alpha)^2 +o(1) \Big) x^{\alpha}.
$$ 
The proposition follows. 
%
\end{proof}


There is some scope to improve the bound in Proposition 7.1, especially when $\alpha$ is close to $1$.  
Here Lipschitz estimates like \eqref{Hal3} show that 
$$ 
x^{-\alpha} \sum_{n\le x^{\alpha}} f(n)  \ll (1-\alpha)^{1-\frac{2}{\pi} +o(1)},
$$ 
which is plainly better than the bound in Proposition 7.1 for $\alpha$ sufficiently close to $1$.  




\bibliographystyle{plain}

\end{document}